\documentclass[a4paper,10pt]{scrartcl}

\usepackage[utf8]{inputenc}
\usepackage{amsmath,amsthm,amssymb}
\usepackage{dsfont}
\usepackage{xcolor}
\definecolor{darkblue}{rgb}{0, 0, 0.5}
\usepackage[colorlinks,linkcolor=darkblue,citecolor=darkblue,urlcolor=darkblue]{hyperref}
\usepackage{bm}
\usepackage{tikz}  
\usetikzlibrary{patterns} 
\usepackage{pgfplots} 
\pgfplotsset{compat=newest} 

\theoremstyle{plain}
\newtheorem{theorem}{Theorem}
\newtheorem{lemma}{Lemma}

\theoremstyle{definition}
\newtheorem{assumption}{Assumption}
\newtheorem{definition}{Definition}

\theoremstyle{remark}
\newtheorem{remark}{Remark}


\usepackage{bm}
\newcommand{\yb}{\bm{y}}
\newcommand{\Ab}{\bm{A}}
\newcommand{\Bb}{\bm{B}}
\newcommand{\Cb}{\bm{C}}
\newcommand{\Mb}{\bm{M}}
\newcommand{\Pb}{\bm{P}}
\newcommand{\Qb}{\bm{Q}}
\newcommand{\Rb}{\bm{R}}
\newcommand{\Sb}{\bm{S}}
\newcommand{\Wb}{\bm{W}}

\newcommand{\R}{\mathbb{R}}

\newcommand{\ordo}{\mathcal{O}}
\newcommand{\I}{\mathcal{I}}

\newcommand{\iprod}[2]{\langle #1, #2 \rangle}
\renewcommand{\exp}[1]{\mathrm{e}^{#1}}

\newcommand{\ds}{\mathrm{d}s}
\newcommand{\dt}{\mathrm{d}t}

\DeclareOldFontCommand{\sc}{\normalfont\scshape}{\@nomath\sc}
\DeclareOldFontCommand{\bf}{\normalfont\bfseries}{\mathbf}

\newcommand{\email}[1]{Email: \href{mailto:#1}{\nolinkurl{#1}}}

\begin{document}

\title{A linear implicit Euler method for the finite element discretization of a controlled stochastic heat equation}

\author{Peter Benner\thanks{Max Planck Institute for Dynamics of Complex Technical Systems, Sandtorstrasse~1, 39106 Magdeburg, Germany.
         \email{benner@mpi-magdeburg.mpg.de}}
\and
Tony Stillfjord\thanks{Centre for Mathematical Sciences, Lund University,
           Box 118, 221 00 Lund, Sweden.\vskip0pt
           \email{tony.stillfjord@math.lth.se}}
\and
Christoph Trautwein\thanks{Institute  for  Mathematics,  Friedrich  Schiller  University Jena,
           Ernst–Abbe–Platz 2, 07743 Jena,  Germany,
           \email{christoph.trautwein@uni-jena.de}}
}

\date{}
\maketitle

\renewcommand*{\raggedsection}{\centering}
\subsubsection*{Abstract}
We consider a numerical approximation of a linear quadratic control problem constrained by the stochastic heat equation with non-homogeneous Neumann boundary conditions. This involves a combination of distributed and boundary control, as well as both distributed and boundary noise.
We apply the finite element method for the spatial discretization and the linear implicit Euler method for the temporal discretization.
Due to the low regularity induced by the boundary noise, convergence orders above $1/2$ in space and $1/4$ in time cannot be expected. We prove such optimal convergence orders for our full discretization when the distributed noise and the initial condition are sufficiently smooth. Under less smooth conditions, the convergence order is further decreased.
 Our results only assume that the related (deterministic) differential Riccati equation can be approximated with a certain convergence order, which is easy to achieve in practice. 
We confirm these theoretical results through a numerical experiment in a two dimensional domain.

\renewcommand*{\raggedsection}{\flushleft}

\section{Introduction}

This paper is devoted to a numerical scheme for a linear quadratic control problem constrained by the stochastic heat equation with non-homogeneous Neumann boundary conditions.
We prove optimal convergence orders for a full discretization which combines a linear implicit Euler method in time and a finite element discretization in space.

For time-dependent heat distributions considered in a bounded domain, noise terms in the sense of random heating or cooling phenomenas arise due to imperfect insulation and other uncertain environmental effects.
In engineering applications, this might lead to undesired behavior. To keep a desired heat profile, it is therefore necessary to regulate the system. This task can be formulated as a linear quadratic control problem constrained by the stochastic heat equation, where controls and additive noise terms are located inside the domain as well as on the boundary.
Here, we treat the case of noise terms defined by Q-Wiener processes.
In stochastic control theory, it is well known that the concept of mild solutions is useful to include non-homogeneous boundary conditions, see \cite{Debussche07,Fabbri09,Fabbri17,Guatteri11,Yu11}.
In this context, we also refer to related deterministic control problems, see \cite{BennerMena18,Bensoussan07} and the references therein.
Typically, optimal controls as solutions of stochastic linear quadratic control problems are characterized by a feedback law, see \cite{Ahmed81,Benner18,Curtain78,Duncan12,Hu18}.
These feedback laws often involve the solution to a suitable operator-valued differential Riccati equation.
In this paper, the Riccati equation is deterministic resulting from the fact that only additive noise terms are included.
As a consequence, the optimal heat distribution fulfills a system of a linear stochastic partial differential equation (SPDE), referred to as the controlled stochastic heat equation, which is coupled to the operator-valued differential Riccati equation.
The main obstacle is that both the controlled stochastic heat equation as well as the Riccati equation can not be solved explicitly.
For that reason, we analyze a numerical approximation of the system describing the optimal heat distribution.

For the spatial discretization, we use the finite element method as introduced in \cite{Thomee06}, where only parabolic equations with homogeneous boundary conditions are considered.
The case of non-homogeneous boundary conditions is studied in, e.g.,~\cite{Lasiecka86}.
Here, we need a generalization of this theory since the system includes Q-Wiener processes.
Numerical simulations for Q-Wiener processes with values in Hilbert spaces as well as for some specific SPDEs are demonstrated in \cite{LordPowellShardlow}.

Temporal discretization of SPDEs has become an active research area within the last years.
Equations driven by additive noise terms are considered in \cite{Wang17}, and \cite{Kruse14,Lord13,Tambue19} also consider the case of multiplicative noise terms.
These papers have in common that the linear implicit Euler method is used for the temporal discretization. This is essentially the usual implicit Euler method but with the noise terms treated explicitly, since treating them implicitly makes no sense. The stochastic part of the equation is therefore treated explicitly, and the deterministic part implicitly. We follow the same approach in this paper.
The error analyses are mainly based on the fact that the underlying equation involves a closed operator generating an analytic semigroup, such that fractional powers of this closed operator are well defined.

The shortcoming of the papers mentioned above is that they only consider equations with homogeneous boundary conditions
We will extend these results by including instead non-homogeneous Neumann boundary conditions. Because this leads to a less regular solution, the convergence order is decreased. 
However, the theory of fractional powers to closed operators can still be applied, and we use this to prove optimal convergence orders under the assumption that the associated Riccati equation can be well approximated. We make such an assumption mainly because there is a lack of temporal error analyses applicable to the current situation, and providing such a proof is out of the scope of this paper. We refer to \cite{Lasiecka00} for related results on deterministic linear quadratic control problems and their corresponding Riccati equations.

In order to illustrate our theoretical results, we implement our method in MATLAB and perform a numerical experiment which shows the expected convergence orders. We also confirm that achieving the assumed convergence orders for the approximation of the Riccati equation is straightforward in practice.

The paper is organized as follows.
In Section \ref{sec:controlproblem}, we introduce the linear quadratic control problem constrained by the stochastic heat equation.
We state the optimal controls and derive the resulting system describing the optimal heat distribution.
Section \ref{sec:discretization} is devoted to the numerical scheme of the controlled stochastic heat equation and the Riccati equation. We also state the main result concerning the convergence order. In order to prepare for the proof of this theorem, we derive several auxiliary results on continuity, consistency and stability in Section~\ref{sec:auxiliary}. The proof of the main result then follows in Section \ref{sec:proof}.
Finally, in Section \ref{sec:numericalexperiments}, we discuss the implementation and illustrate the theoretical results through a numerical experiment.

\section{A linear quadratic control problem constrained by the stochastic heat equation}\label{sec:controlproblem}

Throughout this paper, let $(\Omega,\mathcal{F}, \mathbb{P})$ be a complete probability space endowed with a filtration $(\mathcal{F}_t)_{t \geq 0}$ satisfying $\mathcal F_t = \bigcap_{s > t} \mathcal F_s$ for all $t \geq 0$ and $\mathcal F_0$ contains all sets of $\mathcal F$ with $\mathbb P$-measure 0. We use $\mathbb E$ to denote the expectation with respect to this probability space.
Moreover, we assume that $\mathcal D \subset \mathbb{R}^n$ for $n \geq 1$ is either a bounded domain with sufficiently smooth boundary $\partial \mathcal D$ or a bounded and convex domain.

First, we introduce some basic notations and we state properties of operators frequently used in the remaining part.
For $s \geq 0$, let $H^s(\mathcal D)$ denote the usual Sobolev space.
We set $H = L^2(\mathcal D)$ and let $I$ denote the identity operator on $H$. We introduce the Neumann realization of the Laplace operator $A\colon D(A) \subset H \rightarrow H$ defined by
\begin{equation*}
 A y = \Delta y
\end{equation*}
for every $y \in D(A)$ with 
\begin{equation*}
 D(A) = \left\{ y \in H^2(\mathcal D) : \frac{\partial}{\partial \nu} y = 0 \text{ on } \partial \mathcal D \right\}.
\end{equation*}
The characterization of the domain results from existence and uniqueness results of the corresponding elliptic problem, see \cite{Grisvard85}.
The operator $A$ is the infinitesimal generator of an analytic semigroup $\left(e^{A t}\right)_{t \geq 0}$ of contractions such that for $\lambda>0$, fractional powers of $\lambda-A$ denoted by $(\lambda-A)^\alpha$ with $\alpha \in \mathbb R$ are well defined.
For more details in a more general framework, we refer to \cite{Pazy83,Vrabie03}, but we have also collected the main properties which we need in Section~\ref{sec:auxiliary}.

For $\alpha \in \mathbb R$, the space $D((\lambda-A)^\alpha)$ equipped with the inner product 
\begin{equation*}
 \langle y,z \rangle_\alpha = \left\langle (\lambda-A)^\alpha y,(\lambda-A)^\alpha z \right\rangle_H
\end{equation*}
becomes a Hilbert space.
The corresponding norm is denoted by $\|\cdot\|_\alpha$.
In general, the domain of $(\lambda-A)^\alpha$ for $\alpha \in (0,1)$ can be expressed explicitly by interpolation of the spaces $H$ and $D(A)$, see \cite{Lions72}.
In case that $\mathcal D$ is bounded with sufficiently smooth boundary, we have
\begin{equation*}
 D((\lambda-A)^\alpha) = 
 \begin{cases} 
  H^{2 \alpha}(\mathcal D) &\mbox{for } \alpha \in \left(0,3/4\right), \\
  \left\{ y \in H^{2 \alpha}(\mathcal D) : \frac{\partial}{\partial \nu} y = 0 \text{ on } \partial \mathcal D \right\} & \mbox{for } \alpha \in \left(3/4,1\right),
 \end{cases}
\end{equation*}
where we refer to \cite{Fujiwara67}.
We set $H_b = L^2(\partial \mathcal D)$ and introduce the Neumann operator $N \colon H_b \rightarrow H$ given by $g = N h$ with
\begin{equation}\label{neumann}
 \left\{
 \begin{aligned}
  \Delta g(x) &= \lambda \, g(x) \quad \text{in } \mathcal D, \\
  \frac{\partial}{\partial \nu} g(x) &= h(x) \hspace*{0.6cm} \text{on } \partial \mathcal D,
 \end{aligned}
 \right.
\end{equation}
where $\lambda > 0$.
If $\mathcal D$ is bounded with sufficiently smooth boundary, the result $N \in \mathcal{L}\left( H_b; H^{3/2}(\mathcal D)\right)$ was proven in \cite{Lions72}.
In this case, we can therefore conclude that $N \in \mathcal{L}(H_b; D((\lambda-A)^\alpha))$ for $\alpha \in \left(0,3/4\right)$, which means that the operator $(\lambda-A)^\alpha N$ is linear and bounded by the closed graph theorem.
If $\mathcal D$ is instead bounded and convex, then $\mathcal D$ has a Lipschitz boundary and satisfies the cone property, see \cite{Grisvard85}. 
We therefore again obtain $N \in \mathcal{L}(H_b; D((\lambda-A)^\alpha))$ for $\alpha \in \left(0,3/4\right)$, see \cite{Lasiecka80}.

Next, we introduce the controlled stochastic heat equation with non-homogeneous Neumann boundary conditions as an evolution equation.
Here, we include distributed and boundary controls as well as distributed and boundary noise.
Let $U$ contain all $\mathcal{F}_t$-adapted processes $(u(t))_{t \in [0,T]}$ with values in an arbitrary Hilbert space $\bar U$ satisfying $\mathbb E \int_0^T \|u(t)\|_{\bar U}^2 \, dt < \infty$ and let $V$ contain all $\mathcal{F}_t$-adapted processes $(v(t))_{t \in [0,T]}$ with values in $\bar V \subset H_b$ satisfying $\mathbb E \int_0^T \|v(t)\|_{H_b}^2 \, dt < \infty$.
We consider the following controlled system in $H$ for $t \in [0,T]$ and $\lambda > 0$:
\begin{equation}\label{stochheat}
\left\{
 \begin{aligned}
  \mathrm{d} y(t) &= \left[ A y(t) + B u(t) + (\lambda-A) N v(t)\right] \dt + G \, \mathrm{d}W(t) + (\lambda-A) N \, \mathrm{d}W_b(t), \\
  y(0) &= \xi,
 \end{aligned}
\right.
\end{equation}
where $(u(t))_{t \in [0,T]}$ and $(v(t))_{t \in [0,T]}$ represent the distributed and the boundary controls.
We assume that $u \in U$, $B \in \mathcal L(\bar U;\,H)$, and $v \in V$.
The processes $(W(t))_{t \geq 0}$ and $(W_b(t))_{t \geq 0}$ are independent and $\mathcal{F}_t$-adapted Q-Wiener processes with values in $H$ and $H_b$, respectively.
The corresponding covariance operators are denoted by $Q \in \mathcal L(H)$ and $Q_b \in \mathcal L(H_b)$.
We make the following assumptions:

\begin{assumption}\label{initialassumption}
 The initial value $\xi \in L^2(\Omega;\,D((\lambda-A)^{\beta/2}))$ with $\beta \in (0,2)$ is $\mathcal F_0$-measurable.
\end{assumption}

\begin{remark}
 The results shown in this section also holds for an $\mathcal F_0$-measurable initial value $\xi \in L^2(\Omega;\,H)$.
 We make the additional regularity requirement due to the main result stated in the following section.
\end{remark}

\begin{assumption}\label{noiseoperator}
 We assume that $G$ is a square integrable random variable with values in the space of Hilbert-Schmidt operators mapping $Q^{1/2}(H)$ into $D((\lambda-A)^{\beta/2})$ denoted by $\mathcal L_{HS}(Q^{1/2}(H);\,D((\lambda-A)^{\beta/2}))$, where $\beta \in (0,2)$ arises from Assumption \ref{initialassumption}.
\end{assumption}

\begin{definition}\label{stochheatdef}
 A predictable process $(y(t))_{t \in [0,T]}$ with values in $H$ is a mild solution of system \eqref{stochheat} if
 \begin{equation*}
 \sup_{t \in [0,T]} \mathbb E \|y(t)\|_H^2 < \infty
 \end{equation*}
 and for all $t \in [0,T]$ and $\mathbb P$-a.s.
 \begin{align*}
  y(t) &= e^{A t} \xi + \int\limits_0^t e^{A (t-s)} B u(s) \, \ds + \int\limits_0^t (\lambda-A) e^{A (t-s)} N v(s) \, \ds + \int\limits_0^t e^{A (t-s)} G \, \mathrm{d}W(s) \\
  &\quad + \int\limits_0^t (\lambda-A) e^{A (t-s)} N \, \mathrm{d}W_b(s).
 \end{align*}
\end{definition}

For an existence and uniqueness result of a mild solution to system \eqref{stochheat}, we refer to \cite{Benner18}.
Next, we introduce the cost functional $J \colon U \times V \rightarrow \mathbb R$ defined by
\begin{equation*}
 J(u,v) = \mathbb E \left[ \int\limits_0^T \langle C \, y(t), C \, y(t) \rangle_Z + \langle R \, u(t), u(t) \rangle_H + \langle R_b \, v(t), v(t) \rangle_{H_b} \dt \right],
\end{equation*}
where $C \in \mathcal L(H;Z)$ represents an observation operator mapping $H$ into an arbitrary Hilbert space $Z$.
The operators $R \in \mathcal L(H)$ and $R_b \in \mathcal L(H_b)$ are given scaling factors for the costs of the controls and are assumed to be invertible.
The aim is to find controls $\overline u \in U$ and $\overline v \in V$ such that
\begin{equation*}
 J(\overline u, \overline v) = \inf_{u \in U, v \in V} J(u,v).
\end{equation*}
The controls $\overline u \in U$ and $\overline v \in V$ are called optimal controls.
In \cite{Ahmed81,Benner18,Curtain78,Duncan12,Fabbri09,Hu18}, similar control problems are considered with the result that the optimal controls satisfy a feedback law.
We follow the same approach here, and therefore introduce the following Riccati equation in $\mathcal L(H)$:
\begin{equation}\label{riccati}
\left\{
 \begin{aligned}
  \frac{\mathrm{d}}{\dt}\mathcal P(t) &= A \mathcal P(t) + \mathcal P(t) A - \mathcal P(t)B R^{-1} B^*\mathcal P(t) - \mathcal{H}^*(t) \mathcal{G} R_b^{-1} \mathcal{G}^* \mathcal{H}(t) +C^*C\\
  \mathcal P(T) &= 0,
 \end{aligned}
\right.
\end{equation}
where $\mathcal{H}(t) = (\lambda-A)^{1-\alpha}\mathcal P(t)$, $\mathcal{G} = (\lambda-A)^\alpha N$ with $\alpha \in (1/2,3/4)$.
We make the following definition, where $\Sigma(H)$ denotes the space of all symmetric operators on $H$ and $C([0,T];\Sigma(H))$ is endowed with the topology of uniform convergence:

\begin{definition}\label{riccatidef}
 The process $(\mathcal P(t))_{t \in [0,T]}$ is a mild solution of \eqref{riccati} if 
 \begin{itemize}
 \item $\mathcal P \in C([0,T];\Sigma(H))$,
 \item $\mathcal{P}(t) y \in D((\lambda-A)^{1-\alpha})$ for every $y \in H$ and all $t \in [0,T)$,
 \item $(\lambda-A)^{1-\alpha}\mathcal{P} \in C([0,T);\mathcal{L}(H))$,
 \item $\lim_{t \rightarrow 0} t^{1-\alpha} (\lambda-A)^{1-\alpha}\mathcal{P}(t) y = 0$ for every $y \in H$,
\end{itemize}
 and for all $t \in [0,T]$ and every $y \in H$
 \begin{align}\label{riccatimild}
   \mathcal{P}(t) y =& - \int\limits_t^T e^{A(s-t)}\mathcal{P}(s)B R^{-1} B^* \mathcal{P}(s)e^{A(s-t)}y \, \ds \nonumber \\
   & - \int\limits_t^T e^{A(s-t)}[ \mathcal{H}^*(s) \mathcal{G} R_b^{-1}\mathcal{G}^* \mathcal{H}(s) - C^*C ] e^{A(s-t)}y \, \ds.
 \end{align}
\end{definition}

In \cite[Part IV]{Bensoussan07}, existence and uniqueness results of a mild solution to system \eqref{riccati} are shown for some special cases.
The ideas of these proofs are easily adapted to the current situation, and therefore there exists a unique mild solution of system \eqref{riccati}.

\begin{remark}\label{riccatirem}
 Equation \eqref{riccatimild} can be written equivalently as
 \begin{align*}
  \frac{\mathrm{d}}{\dt} \langle \mathcal P(t) y, z \rangle_H &= \langle \mathcal P(t) y, A z \rangle_H + \langle\mathcal P(t) A y, z \rangle_H - \langle R^{-1} B^*\mathcal P(t) y, B^*\mathcal P(t) z \rangle_H \\
  &\quad - \langle R_b^{-1} \mathcal{G}^* \mathcal{H}(t) y, \mathcal{G}^* \mathcal{H}(t) z \rangle_H + \langle C y, C z \rangle_Z
 \end{align*}
 for every $y,z \in D(A)$, see \cite{Bensoussan07}.
\end{remark}

The optimal controls $\overline u \in U$ and $\overline v \in V$ satisfy a.e. on $[0,T]$ and $\mathbb P$-a.s.
\begin{align*}
 &\overline u(t) = - R^{-1} B^* \mathcal P(t) y(t), &\overline v(t) = - R_b^{-1} \mathcal G^* \mathcal H(t) y(t).
\end{align*}
Plugging in these formulas in \eqref{stochheat} results in the following controlled system in $H$:
\begin{equation}\label{stochheatcontrolled}
\left\{
 \begin{aligned}
  \mathrm{d} y(t) &= \left[ A y(t) - B R^{-1} B^* \mathcal P(t) y(t) - (\lambda-A) N R_b^{-1} \mathcal G^* \mathcal H(t) y(t)\right] \dt \\
  &\quad + G \, \mathrm{d}W(t) + (\lambda-A) N \, \mathrm{d}W_b(t), \\
  y(0) &= \xi.
 \end{aligned}
\right.
\end{equation}

\section{A linear implicit Euler method for the finite element discretization}\label{sec:discretization}

In this section, we introduce a fully discrete scheme for system \eqref{stochheatcontrolled}.
We denote by $\mathcal T_h$ a triangulation of the domain $\mathcal D$ with meshwidth $h \in (0,1]$.
Let $Y_h \subset Y = D((\lambda-A)^{1/2})$ be the set of continuous functions that are piecewise linear over $\mathcal T_h$.
We introduce the $L^2$-projection $P_h \colon H \rightarrow Y_h$ defined by
\begin{equation*}
 \langle P_h y, z\rangle_H = \langle y, z\rangle_H
\end{equation*}
for every $y \in H$ and every $z \in Y_h$.
Then we have the following basic estimate
\begin{equation}\label{projectionrate}
 \| y - P_h y\|_H \leq K h^\rho \|y\|_{\rho/2}
\end{equation}
for a constant $K>0$ and every $y \in D((\lambda-A)^{\rho/2})$ with $\rho \in [0,2]$, see \cite{Thomee06}.
Moreover, let $R_h \colon Y \rightarrow Y_h$ be the $Y$-projection given by
\begin{equation*}
 \langle (\lambda-A) R_h y, z\rangle_H = \langle (\lambda-A) y, z\rangle_H
\end{equation*}
for every $y \in Y$ and every $z \in Y_h$.
We have the following relation between the $L^2$-projection $P_h$ and the $Y$-projection $R_h$:
\begin{equation}\label{projectionequation}
 (\lambda-A_h) R_h y = P_h (\lambda-A) y
\end{equation}
for every $y \in D(A)$, see \cite[Lemma 3.1]{Lord13}.
We consider the following semi-discrete version of system \eqref{stochheatcontrolled} in $Y_h$:
\begin{equation}\label{stochheatsemidiscr}
\left\{
 \begin{aligned}
  \mathrm{d} y_h(t) &= \left[ A_h y_h(t) - B_h R^{-1} B_h^* \mathcal P_h(t) y_h(t) - B_h^b R_b^{-1} \left( B_h^b \right)^* \mathcal P_h(t) y_h(t)\right] \dt \\
  &\quad + P_h G \, \mathrm{d}W(t) + B_h^b \, \mathrm{d}W_b(t), \\
  y^h(0) &= P_h \xi,
 \end{aligned}
 \right.
\end{equation}
where the operator $A_h \colon Y_h \rightarrow Y_h$ satisfies for every $y,z \in Y_h$
\begin{equation*}
 \langle A_h y, z\rangle_H = \langle A y, z\rangle_H
\end{equation*}
and $B_h = P_h B$.
As a consequence of inequality \eqref{projectionrate}, we get
\begin{align}\label{inputoperatorrate}
 \| B^*y - B_h^* y\|_H \leq K h^\rho \|y\|_{\rho/2}
\end{align}
for a constant $K>0$ and every $y \in D((\lambda-A)^{\rho/2})$ with $\rho \in [0,2]$.
Moreover, we have $B_h^b = (\lambda-A_h) R_h N \in \mathcal L(H_b;\,H)$ and $(\mathcal P_h(t))_{t \in [0,T]}$ with $\mathcal P_h(t) \in \mathcal L(Y_h)$ is the solution of the semi-discrete version of system \eqref{riccati} given by
\begin{equation}\label{riccatisemidiscr}
\left\{
 \begin{aligned}
 \frac{\mathrm{d}}{\dt}\mathcal P_h(t) &= A_h \mathcal P_h(t) + \mathcal P_h(t) A_h - \mathcal P_h(t)B_h R^{-1} B_h^*\mathcal P_h(t) \\
 &\quad - \mathcal P_h(t) B_h^b R_b^{-1} \left( B_h^b \right)^* \mathcal P_h(t) +C_h^*C_h,\\
 \mathcal P_h(T) &= 0,
 \end{aligned}
 \right.
\end{equation}
where $C_h = C P_h$.
By definition, the operator $A_h$ is again the infinitesimal generator of an analytic semigroup $(e^{A_h t})_{t \geq 0}$ on $Y_h$ such that fractional powers of $\lambda-A_h$ with $\lambda>0$ are well defined.
We can therefore introduce the solutions of system \eqref{stochheatsemidiscr} and \eqref{riccatisemidiscr} in a mild sense analogously to Definitions \ref{stochheatdef} and \ref{riccatidef}.
We note that the mild solution of system \eqref{riccatisemidiscr} coincides again with the weak solution according to Remark \ref{riccatirem}.

Next, let $t_0,t_1,...,t_M$ be a partition of the time interval $[0,T]$ such that $0 = t_0 < t_1 < ... < T_M = T$.
We assume that $t_m-t_{m-1} = \Delta t$ for each $m=1,...,M$ with $\Delta t \in (0,1]$.
Applying a linear implicit Euler method to system \eqref{stochheatsemidiscr} gives us the following fully discrete system in $Y_h$ for $m=1,...,M$:
\begin{equation}\label{stochheatfullydiscr}
\left\{
 \begin{aligned}
  y_h^m &= S_{h,\Delta t} y_h^{m-1} - \Delta t S_{h,\Delta t} B_h R^{-1} B_h^* \mathcal P_h^{m-1} y_h^{m-1} - \Delta t S_{h,\Delta t} B_h^b R_b^{-1} \left( B_h^b \right)^* \mathcal P_h^{m-1} y_h^{m-1} \\
  &\quad + S_{h,\Delta t} P_h G \, \delta W_m + S_{h,\Delta t} B_h^b \, \delta W_{b,m}, \\
  y_h^0 &= P_h \xi,
 \end{aligned}
 \right.
\end{equation}
where $S_{h,\Delta t} = (I - \Delta t A_h)^{-1}$, $\delta W_{m-1} = W(t_m) -W(t_{m-1})$ and $\delta W_{b,m} = W_b(t_m) -W_b(t_{m-1})$.
The operator $\mathcal P_h^m \in \mathcal L(Y_h)$ results from a time discretization of system \eqref{riccatisemidiscr}.
We make the following assumption.

\begin{assumption}\label{riccatiassumption}
 We require for each $m=0,1,...,M-1$
 \begin{gather*}
  \|\mathcal P(t_m) - \mathcal P_h^m P_h \|_{\mathcal L(H)} \leq c \, (h^2 + \Delta t), \\
  \| \mathcal G^* \mathcal H(t_m) - \left( B_h^b \right)^* \mathcal P_h^m P_h \|_{\mathcal L(H)} \leq c \, (h + \Delta t^{1/4}),
 \end{gather*}
 where $c>0$ is a constant.
\end{assumption}

\begin{remark}
 Note that we can at least write formally $\mathcal G^* \mathcal H(t) = \left( B_h^b \right)^* P(t)$ for all $t \in [0,T)$.
 Hence, the Assumption \ref{riccatiassumption} provides especially the convergence rate for the operator $\left( B_h^b \right)^* P(t_m) - \left( B_h^b \right)^* \mathcal P_h^m P_h$ for each $m=0,1,...,M-1$.
 For some convergence results, we refer to \cite{Lasiecka00}.
 Here, we will verify the convergence rates by a numerical experiment in Section \ref{sec:numericalexperiments}.
\end{remark}

\noindent We are now in a position to state the main result of the paper:

\begin{theorem}\label{mainresult}
 Let $(y(t))_{t \in [0,T]}$ be the mild solution of system \eqref{stochheatcontrolled} and let $y_h^m$ satisfy the fully discrete system \eqref{stochheatfullydiscr} for $m=0,1,...,M-1$.
 If Assumptions \ref{initialassumption}--\ref{riccatiassumption} are fulfilled, then there exists a constant $c>0$ such that for sufficiently small $\varepsilon > 0$,
 \begin{equation*}
  \|y(t_m)-y_h^m\|_{L^2(\Omega;\,H)} \leq c \left( h^{\min\{1/2-\varepsilon,\beta\}} +\Delta t^{\min\{1/4-\varepsilon,\beta/2\}} \right).
 \end{equation*}
\end{theorem}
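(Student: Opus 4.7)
The plan is to match the mild representation from Definition \ref{stochheatdef} applied to the closed-loop system \eqref{stochheatcontrolled} with the fully unrolled form of the recursion \eqref{stochheatfullydiscr}. Both formulas naturally split into five analogous pieces: the semigroup acting on $\xi$, two deterministic integrals containing the distributed feedback $BR^{-1}B^*\mathcal{P}(s)y(s)$ and the boundary feedback $(\lambda-A)NR_b^{-1}\mathcal{G}^*\mathcal{H}(s)y(s)$, and two It\^o integrals against $\mathrm{d}W$ and $\mathrm{d}W_b$. I would estimate the $L^2(\Omega;H)$ norm of each of the five differences separately using the consistency and stability results developed in Section \ref{sec:auxiliary} together with Assumptions \ref{initialassumption}--\ref{riccatiassumption}, and then close the recursion with a discrete Gronwall inequality.

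For the initial-value piece, the space-time consistency applied to $\xi\in L^2(\Omega;D((\lambda-A)^{\beta/2}))$ supplied by Assumption \ref{initialassumption} gives a bound of order $h^\beta+\Delta t^{\beta/2}$, which lies below the target. For the distributed feedback integral, I would add and subtract the intermediate quantities $e^{A(t_m-s)}B_hR^{-1}B_h^*\mathcal{P}_h^{k-1}P_hy(s)$ and its fully discrete analogue, separating the error into (i) a semigroup-consistency piece acting on a bounded integrand, (ii) a $B-B_h$ piece controlled by \eqref{inputoperatorrate}, (iii) a Riccati piece controlled by the first bound of Assumption \ref{riccatiassumption}, and (iv) a remainder proportional to $\|y(t_{k-1})-y_h^{k-1}\|_{L^2(\Omega;H)}$ that is absorbed by Gronwall. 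The boundary feedback integral is treated in the same spirit, but now invoking the second bound of Assumption \ref{riccatiassumption} for $\mathcal{G}^*\mathcal{H}(t_m)$ versus $(B_h^b)^*\mathcal{P}_h^mP_h$, whose rate $h+\Delta t^{1/4}$ already matches the limiting orders in the theorem.

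The two stochastic integrals are attacked via It\^o's isometry so that the error becomes the time integral of the squared Hilbert-Schmidt norm of the difference of integrands. For the distributed noise, Assumption \ref{noiseoperator} provides $\beta/2$ fractional regularity on $G$ and the semigroup-consistency estimate yields an $O(h^\beta+\Delta t^{\beta/2})$ bound, within budget. For the boundary noise, one must handle the unbounded operator $(\lambda-A)N$; I would factor it as $(\lambda-A)^{1-\alpha}\cdot(\lambda-A)^\alpha N$ with $\alpha\in(1/2,3/4)$, where the second factor is bounded on $H_b$ and the analytic semigroup supplies the singularity $(t_m-s)^{-(1-\alpha)}$, which is square-integrable in time precisely because $\alpha>1/2$. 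Combining this with a consistency estimate for $(\lambda-A)^{1-\alpha}$ composed with the difference of the continuous and discrete propagators produces a rate of the form $h^{2\alpha-1}+\Delta t^{\alpha-1/2}$ after integration; sending $\alpha\uparrow 3/4$ yields $h^{1/2-\varepsilon}+\Delta t^{1/4-\varepsilon}$, with the $\varepsilon$ absorbing the loss at the endpoint.

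Collecting the five bounds produces an inequality of the shape
\begin{equation*}
\|y(t_m)-y_h^m\|_{L^2(\Omega;H)}^2 \le c\bigl(h^{2\min\{1/2-\varepsilon,\beta\}}+\Delta t^{2\min\{1/4-\varepsilon,\beta/2\}}\bigr)+c\,\Delta t\sum_{k=0}^{m-1}\|y(t_k)-y_h^k\|_{L^2(\Omega;H)}^2,
\end{equation*}
and the stated rate follows from discrete Gronwall. The key obstacle will be the bookkeeping of the boundary noise term: one must redistribute the fractional power $(\lambda-A)^{1-\alpha}$ across the continuous operator $(\lambda-A)e^{A(t-s)}N$ and its discrete counterpart built from $B_h^b=(\lambda-A_h)R_hN$, using \eqref{projectionequation} to commute $R_h$ with the discrete semigroup, while simultaneously keeping the singularity in $t-s$ integrable and preserving the optimal exponents in $h$ and $\Delta t$. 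All remaining terms are smoother than the limiting boundary-noise contribution and fit comfortably under the stated rates.
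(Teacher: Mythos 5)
Your proposal follows essentially the same route as the paper: the identical five-way decomposition of the error (initial datum, two feedback integrals, two stochastic convolutions), the same splitting of each deterministic feedback term into semigroup consistency, operator approximation, Riccati approximation via Assumption~\ref{riccatiassumption}, and a Gr\"onwall remainder, the same It\^o-isometry treatment of the noise terms with the factorization $(\lambda-A)N=(\lambda-A)^{1-\alpha}(\lambda-A)^{\alpha}N$ and $\alpha\uparrow 3/4$ producing the limiting rates $h^{1/2-\varepsilon}+\Delta t^{1/4-\varepsilon}$, and a closing discrete Gr\"onwall argument. The only implicit ingredient you do not name is the temporal H\"older continuity of $s\mapsto\mathcal P(s)y(s)$ and $s\mapsto\mathcal H(s)y(s)$ (Lemmas~\ref{mildsolcontinuity}--\ref{riccaticontinuity2}), which is what your "remainder proportional to $\|y(t_{k-1})-y_h^{k-1}\|$" step requires and which is where the $\beta/2$ cap on the temporal rate actually enters; this is exactly what the paper supplies in Section~\ref{sec:auxiliary}.
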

\noindent The proof of this theorem will be provided in Section~\ref{sec:proof}. In order to prepare, we will first collect and derive a number of lemmata in the following section.

\section{Auxiliary results}\label{sec:auxiliary}
We start by collecting some well-known properties of fractional powers of operators. For a proof, see e.g.~\cite{Pazy83,Vrabie03}.

\begin{lemma}\label{fractionalprop}
 Let $A\colon D(A) \subset H \rightarrow H$ be the Neumann realization of the Laplace operator.
 Then
 \begin{enumerate}
  \item for $\alpha \leq 0$, the operator $(\lambda-A)^\alpha$ is linear and bounded and for $\alpha>0$, the operator $(\lambda-A)^\alpha$ is linear and closed;
  \item $\alpha \geq \beta \geq 0$ implies $D\left( (\lambda-A)^\alpha \right) \subset D( (\lambda-A)^\beta )$ and for every $y \in D\left( (\lambda-A)^\alpha \right)$
   \begin{equation*}
    \| (\lambda-A)^\beta y \|_H \leq M_0 \| (\lambda-A)^\alpha y \|_H;
   \end{equation*}
  \item $D\left( (\lambda-A)^\alpha \right)$ with $\alpha>0$ is dense in $H$;
  \item $(\lambda-A)^{\alpha+\beta} y = (\lambda-A)^\alpha (\lambda-A)^\beta y$ if $y \in D\left( (\lambda-A)^\gamma \right)$, where $\gamma= \max \{\alpha, \beta, \alpha+\beta\}$;
  \item for $\alpha>0$ and $t>0$, we have $e^{A t}\colon H \rightarrow D\left((\lambda-A)^\alpha\right)$ and $(\lambda-A)^\alpha e^{A t} y= e^{A t}(\lambda-A)^\alpha y$ if $y \in D\left( (\lambda-A)^\alpha \right)$;
  \item the operator $(\lambda-A)^\alpha e^{A t}$ is linear and bounded for $\alpha>0$ and $t > 0$. 
  Moreover, we have for every $y \in H$
  \begin{equation*}
   \| (\lambda-A)^\alpha e^{A t} y \|_H \leq M_\alpha t^{-\alpha} \|y \|_H;
  \end{equation*}
  \item we have for every $y \in D\left( (\lambda-A)^\alpha \right)$ with $\alpha \in (0,1]$ and all $t>0$
  \begin{equation*}
   \| e^{A t} y - y \|_H \leq c_\alpha t^\alpha \|(\lambda-A)^\alpha y \|_H.
  \end{equation*}
 \end{enumerate}
\end{lemma}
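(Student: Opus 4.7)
All seven items are standard facts about fractional powers of a generator of an analytic contraction semigroup, and my plan is to place $\lambda - A$ in that abstract framework and invoke the theory as developed in \cite{Pazy83,Vrabie03}. Since $A$ generates an analytic semigroup of contractions, $\lambda - A$ is a positive sectorial operator with bounded inverse satisfying $\|(\lambda - A)^{-1}\|_{\mathcal{L}(H)} \leq 1/\lambda$ and the usual sectorial resolvent estimate. Fractional powers are then defined by the Balakrishnan integral
\begin{equation*}
(\lambda - A)^{-\alpha} y = \frac{1}{\Gamma(\alpha)} \int_0^\infty t^{\alpha - 1} e^{-\lambda t} e^{A t} y \, \dt, \qquad \alpha > 0,
\end{equation*}
and $(\lambda - A)^\alpha$ is defined as the inverse of this bounded, injective operator on its natural domain. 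Claim (i) follows immediately: the integral converges absolutely by contractivity of $e^{A t}$, giving boundedness of negative powers, while positive powers are closed as inverses of injective bounded operators.

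For (iv), composing two Balakrishnan integrals and applying the Beta-function identity yields the semigroup law $(\lambda - A)^{\alpha + \beta} = (\lambda - A)^\alpha (\lambda - A)^\beta$ for fractional powers. This in turn delivers (ii) through $(\lambda - A)^\beta y = (\lambda - A)^{-(\alpha - \beta)} (\lambda - A)^\alpha y$ on $D((\lambda - A)^\alpha)$, with $M_0 = \|(\lambda - A)^{-(\alpha - \beta)}\|_{\mathcal{L}(H)}$. Claim (iii) follows once (v) is established: for any $y \in H$ one has $e^{A t} y \in D((\lambda - A)^\alpha)$ for every $t > 0$, and strong continuity of the semigroup gives $e^{A t} y \to y$ as $t \to 0^+$, exhibiting $y$ as a limit of elements of $D((\lambda-A)^\alpha)$.

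For (v) and (vi) I would exploit analyticity via the Dunford--Taylor representation
\begin{equation*}
e^{A t} = \frac{1}{2\pi i} \int_\Gamma e^{z t} (z - A)^{-1} \, \mathrm{d}z
\end{equation*}
over a suitable sectorial contour $\Gamma$. Uniform boundedness of $(\lambda - A)^\alpha (z - A)^{-1}$ on $\Gamma$ together with absolute convergence of the contour integral proves $e^{A t} H \subset D((\lambda - A)^\alpha)$ and the commutation identity in (v); the sharp bound $\|(\lambda - A)^\alpha e^{A t}\|_{\mathcal{L}(H)} \leq M_\alpha t^{-\alpha}$ in (vi) then follows from the rescaling $z \mapsto z/t$ in the contour. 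Finally, for (vii) I would first treat $\alpha = 1$ with $y \in D(A)$, where
\begin{equation*}
e^{A t} y - y = \int_0^t e^{A s} A y \, \ds
\end{equation*}
combined with contractivity and a shift by $\lambda$ gives the desired estimate. For general $\alpha \in (0,1)$, the same representation together with the factorisation $(\lambda - A) e^{A s} y = (\lambda - A)^{1-\alpha} e^{A s} (\lambda - A)^\alpha y$ and the bound from (vi) yields an integrand of size $M_{1-\alpha}\, s^{-(1-\alpha)} \|(\lambda-A)^\alpha y\|_H$; integrating $\int_0^t s^{-(1-\alpha)} \, \ds = t^\alpha / \alpha$ produces the exponent $t^\alpha$.

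The main delicate point in this plan is justifying the commutation of $(\lambda - A)^\alpha$ with the Balakrishnan and Dunford--Taylor integrals needed for (iv) and (v); this is where the sectorial resolvent estimate is genuinely used, and the standard way to handle it is to truncate the integrals, commute with the bounded operator $(\lambda - A)^\alpha$ applied after restricting to $D((\lambda - A)^\alpha)$, and pass to the limit using closedness of fractional powers as in \cite[Ch.~2.6]{Pazy83}.
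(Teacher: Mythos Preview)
Your sketch is correct and aligns with the standard treatment in the references the paper cites. In fact, the paper does not give a proof of this lemma at all: it simply states ``For a proof, see e.g.~\cite{Pazy83,Vrabie03}'' and moves on, treating all seven items as well-known facts about fractional powers of sectorial operators. Your outline---Balakrishnan integral for negative powers, the Beta-function identity for the semigroup law, the Dunford--Taylor contour for smoothing and the $t^{-\alpha}$ bound, and the integral representation $e^{At}y - y = \int_0^t e^{As}Ay\,\ds$ combined with (vi) for the H\"older estimate---is precisely the route taken in those references, so there is nothing to compare beyond noting that you have supplied what the paper deliberately omits.
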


\subsection{Continuity of mild solutions to the controlled system}

Next, we show some useful properties of the exact mild solution $y$ to the controlled system~\eqref{stochheatcontrolled}.
In the following, we use $c > 0$ as a generic constant, which may take different values at different points.

\begin{lemma}\label{mildsolbound}
 Let $(y(t))_{t \in [0,T]}$ be the mild solution of system \eqref{stochheatcontrolled}.
 If Assumptions \ref{initialassumption} and \ref{noiseoperator} hold, then there exists a constant $c > 0$ such that for all $t \in [0,T]$,
 \begin{equation*}
  \|y(t)\|_{L^2(\Omega;\,H)} \leq c \left(1 + \|\xi\|_{L^2(\Omega;\,D((\lambda-A)^{\beta/2})}\right).
 \end{equation*}
\end{lemma}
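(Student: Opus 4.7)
The plan is to bound each of the five terms in the mild formulation of \eqref{stochheatcontrolled} separately in $L^2(\Omega; H)$ and then close the resulting estimate with a generalized Gronwall argument. Substituting the feedback controls into Definition \ref{stochheatdef}, the mild solution reads
\begin{align*}
 y(t) &= e^{A t}\xi - \int_0^t e^{A(t-s)} B R^{-1} B^* \mathcal P(s) y(s) \, \ds - \int_0^t (\lambda-A) e^{A(t-s)} N R_b^{-1} \mathcal G^* \mathcal H(s) y(s) \, \ds \\
 &\quad + \int_0^t e^{A(t-s)} G \, \mathrm{d}W(s) + \int_0^t (\lambda-A) e^{A(t-s)} N \, \mathrm{d}W_b(s).
\end{align*}
The initial term is bounded by $\|\xi\|_H \leq c\|\xi\|_{\beta/2}$ via contractivity of the semigroup and Lemma \ref{fractionalprop}(ii), and the distributed control term is bounded by $c\int_0^t \|y(s)\|_H\,\ds$ using contractivity together with the uniform boundedness of $B$, $R^{-1}$ and $\mathcal P \in C([0,T];\Sigma(H))$.

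To handle the two terms that feature the unbounded operator $(\lambda-A)N$, I would exploit the factorization $(\lambda-A)e^{A(t-s)}N = (\lambda-A)^{1-\alpha}e^{A(t-s)}\mathcal G$, which is well-defined since $\mathcal G \in \mathcal L(H_b; H)$ and the analytic semigroup maps into every fractional domain. Lemma \ref{fractionalprop}(vi) then yields the operator-norm bound $\|(\lambda-A)e^{A(t-s)}N\|_{\mathcal L(H_b;H)} \leq c(t-s)^{-(1-\alpha)}$. Combined with the uniform boundedness of $R_b^{-1}$ and of $\mathcal G^* \mathcal H(s)$ on $[0,T)$ (the latter from $(\lambda-A)^{1-\alpha}\mathcal P \in C([0,T);\mathcal L(H))$ together with adjointness), the boundary control term is dominated by $c \int_0^t (t-s)^{-(1-\alpha)}\|y(s)\|_H\,\ds$, and the kernel is integrable because $\alpha \in (1/2, 3/4)$ forces $1-\alpha < 1/2$.

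For the two stochastic terms I would apply It\^{o}'s isometry. Assumption \ref{noiseoperator}, together with the embedding $D((\lambda-A)^{\beta/2}) \hookrightarrow H$ and semigroup contractivity, gives a time-independent bound of the distributed-noise contribution in terms of $\mathbb E\|G\|_{\mathcal L_{HS}(Q^{1/2}(H); H)}^2$. For the boundary noise, the same factorization combined with the trace-class property of $Q_b$ upgrades the operator-norm estimate to a Hilbert-Schmidt bound $\|(\lambda-A)e^{A(t-s)} N Q_b^{1/2}\|_{HS}^2 \leq c(t-s)^{-2(1-\alpha)}$, and the integral $\int_0^t (t-s)^{-2(1-\alpha)}\,\ds$ is finite because $2(1-\alpha) < 1$.

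Squaring, taking expectations and using Minkowski's inequality leads to
\begin{equation*}
 \|y(t)\|_{L^2(\Omega;H)}^2 \leq c\bigl(1 + \|\xi\|_{L^2(\Omega;D((\lambda-A)^{\beta/2}))}^2\bigr) + c \int_0^t \bigl(1 + (t-s)^{-2(1-\alpha)}\bigr) \|y(s)\|_{L^2(\Omega;H)}^2 \, \ds,
\end{equation*}
after which a Henry-type singular Gronwall inequality yields the claim. The main obstacle is the simultaneous treatment of the two boundary terms, since both involve the unbounded operator $(\lambda-A)$ applied to $N$; the factorization through $\mathcal G$ combined with the specific parameter range $\alpha \in (1/2, 3/4)$ is precisely what renders the deterministic kernel integrable and the stochastic isometry finite, so the entire argument hinges on this structural choice.
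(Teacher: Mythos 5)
Your proposal is correct and follows essentially the same route as the paper: the same term-by-term decomposition of the mild solution, the same factorization of $(\lambda-A)e^{A(t-s)}N$ through $(\lambda-A)^{1-\alpha}e^{A(t-s)}\mathcal G$ with $\alpha\in(1/2,3/4)$ giving the integrable kernel $(t-s)^{\alpha-1}$ for the deterministic boundary term and a square-integrable one for the boundary noise, the It\^o isometry for the stochastic terms, and a singular Gr\"onwall inequality to close. The only differences are cosmetic (the paper applies the generalized Gr\"onwall inequality of Ye et al.\ to the unsquared norm and routes the bound on $G$ through $D((\lambda-A)^{(\beta-1)/2})$ rather than embedding directly into $H$).
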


\begin{proof}
 By definition, we get
 \begin{equation}\label{inequality1}
  \|y(t)\|_{L^2(\Omega;\,H)} \leq \I_1(t) + \I_2(t) + \I_3(t),
 \end{equation}
 where
 \begin{align*}
  \I_1(t) &= \left\| e^{A t} \xi + \int\limits_0^t e^{A (t-s)} B R^{-1} B^* \mathcal P(s) y(s) \, \ds \right\|_{L^2(\Omega;\,H)}, \\
  \I_2(t) &= \left\| \int\limits_0^t (\lambda-A) e^{A (t-s)} N R_b^{-1} \mathcal G^* \mathcal H(s) y(s) \, \ds \right\|_{L^2(\Omega;\,H)} \qquad \text{and}
 \end{align*}
 \begin{equation*}
  \I_3(t) = \left\| \int\limits_0^t e^{A (t-s)} G \, \mathrm{d}W(s) + \int\limits_0^t (\lambda-A) e^{A (t-s)} N \, \mathrm{d}W_b(s) \right\|_{L^2(\Omega;\,H)}.
 \end{equation*}
 Recall that the semigroup $(e^{A t})_{t \geq 0}$ is a contraction and that the operators $B$, $R^{-1}$ and $\mathcal P(t)$ are linear and bounded.
 Using Lemma \ref{fractionalprop} (i) and (iv) -- (vi), we have for all $\alpha \in (0,3/4)$
 \begin{align}\label{inequality2}
  \I_1(t) &\leq \left\| (\lambda-A)^{-\beta/2} e^{A t} (\lambda-A)^{\beta/2} \xi \right\|_{L^2(\Omega;\,H)} \nonumber \\
  &\quad + \int\limits_0^t \left\| (\lambda-A)^{1-\alpha} e^{A (t-s)} (\lambda-A)^{\alpha-1} B R^{-1} B^* \mathcal P(s) y(s) \right\|_{L^2(\Omega;\,H)} \ds \nonumber \\
  &\leq c \left[\left\|\xi \right\|_{L^2(\Omega;\,D((\lambda-A)^{\beta/2})} + \int\limits_0^t (t-s)^{\alpha-1} \left\| y(s) \right\|_{L^2(\Omega;\,H)} \ds \right].
 \end{align}
 Recall that the operators $(\lambda-A)^\alpha N, R_b^{-1}$, $\mathcal G^*$ and $\mathcal H(t)$ with $\alpha \in (0,3/4)$ are linear and bounded.
 By Lemma \ref{fractionalprop} (iv) -- (vi), we obtain for all $\alpha \in (0,3/4)$ that
 \begin{align}\label{inequality3}
  \I_2(t) &\leq \int\limits_0^t \left\| (\lambda-A)^{1-\alpha} e^{A (t-s)} (\lambda-A)^\alpha N R_b^{-1} \mathcal G^* \mathcal H(s) y(s) \right\|_{L^2(\Omega;\,H)} \ds \nonumber \\
  &\leq c \int\limits_0^t (t-s)^{\alpha-1} \left\| y(s) \right\|_{L^2(\Omega;\,H)} \ds. 
 \end{align}
 Due to the It\^o isometry and Lemma \ref{fractionalprop} (i) and (iv) -- (vi), we get for all $\alpha \in (1/2,3/4)$ that
 \begin{align}\label{inequality4}
  \I_3(t) &= \left( \mathbb E \int\limits_0^t \left\| e^{A (t-s)} (\lambda-A)^{-(\beta-1)/2} (\lambda-A)^{(\beta-1)/2} G \right\|_{\mathcal L_{HS}(Q^{1/2}(H);\,H)}^2 \ds\right)^{1/2} \nonumber \\
  &\quad + \left( \int\limits_0^t \left\| (\lambda-A)^{1-\alpha} e^{A (t-s)} (\lambda-A)^\alpha N \right\|_{\mathcal L_{HS}(Q_b^{1/2}(H_b);\,H)}^2 \ds \right)^{1/2} \nonumber \\
  &\leq T^{1/2} \left( \mathbb E \left\| G \right\|_{\mathcal L_{HS}(Q^{1/2}(H);\,D((\lambda-A)^{(\beta-1)/2}))}^2 \right)^{1/2} + c \, T^{2\alpha-1}.
 \end{align}
 Substituting the inequalities \eqref{inequality2}--\eqref{inequality4} in \eqref{inequality1} and applying a generalized Gr\"{o}nwall inequality, see \cite[Corollary 2]{Ye2007}, yields for all $\alpha \in (1/2,3/4)$
 \begin{align*}
  \|y(t)\|_{L^2(\Omega;\,H)} &\leq c \left\| \xi \right\|_{L^2(\Omega;\,D((\lambda-A)^{\beta/2})} + T^{1/2} \left( \mathbb E \left\| G \right\|_{\mathcal L_{HS}(Q^{1/2}(H);\,D((\lambda-A)^{(\beta-1)/2}))} \right)^{1/2} \\
  &\quad + c \, T^{2\alpha-1} + c \int\limits_0^t (t-s)^{\alpha-1} \left\| y(s) \right\|_{L^2(\Omega;\,H)} \ds \\
  &\leq c \left(1 + \left\|\xi \right\|_{L^2(\Omega;\,D((\lambda-A)^{\beta/2})}\right),
 \end{align*}
 which completes the proof.
\end{proof}

\begin{lemma}\label{mildsolcontinuity}
 Let $(y(t))_{t \in [0,T]}$ be the mild solution of system \eqref{stochheatcontrolled}.
 If Assumptions \ref{initialassumption} and \ref{noiseoperator} hold, then there exists a constant $c > 0$ such that for all $\tau_1,\tau_2 \in [0,T]$ with $\tau_1 < \tau_2$ and all $\gamma \in (0,1/4)$ with $\gamma \leq \beta/2$,
 \begin{equation*}
  \|y(\tau_2)-y(\tau_1)\|_{L^2(\Omega;\,H)} \leq c \, (\tau_2-\tau_1)^\gamma \left(1 + \|\xi\|_{L^2(\Omega;\,D((\lambda-A)^{\beta/2})}\right).
 \end{equation*}
\end{lemma}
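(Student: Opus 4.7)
My plan is to split the difference $y(\tau_2)-y(\tau_1)$ according to the five terms of the mild formulation (initial, distributed drift, boundary drift, distributed noise, boundary noise) and, within each of the four integral terms, further decompose as
\[
\int_0^{\tau_2} F(\tau_2,s)\,\mathrm{d}\mu(s) - \int_0^{\tau_1} F(\tau_1,s)\,\mathrm{d}\mu(s) = \int_{\tau_1}^{\tau_2} F(\tau_2,s)\,\mathrm{d}\mu(s) + \int_0^{\tau_1}\bigl(e^{A(\tau_2-\tau_1)}-I\bigr)F(\tau_1,s)\,\mathrm{d}\mu(s),
\]
using the semigroup identity $e^{A(\tau_2-s)}=e^{A(\tau_2-\tau_1)}e^{A(\tau_1-s)}$ and pulling $e^{A(\tau_2-\tau_1)}-I$ outside. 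The boundedness of $\|y(s)\|_{L^2(\Omega;H)}$ from Lemma \ref{mildsolbound}, together with boundedness of $B, R^{-1}, \mathcal P(s), R_b^{-1}, \mathcal G^*\mathcal H(s)$, will let me reduce each estimate to handling only the deterministic operator $(\lambda-A)e^{A(\cdot)}N$ (or simpler pieces), combined with $e^{A(\tau_2-\tau_1)}-I$.

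For the initial piece I just write $e^{A\tau_2}\xi-e^{A\tau_1}\xi=(e^{A(\tau_2-\tau_1)}-I)e^{A\tau_1}\xi$ and apply Lemma \ref{fractionalprop}(vii) with exponent $\gamma\leq \beta/2$, using $\xi\in D((\lambda-A)^{\beta/2})$. For the ``new segment'' pieces of the two drift terms I factor $(\lambda-A)e^{A(\tau_2-s)}N=(\lambda-A)^{1-\alpha}e^{A(\tau_2-s)}\cdot (\lambda-A)^\alpha N$ with $\alpha\in(1/2,3/4)$, and Lemma \ref{fractionalprop}(vi) gives an integrable singularity $(\tau_2-s)^{\alpha-1}$ producing the factor $(\tau_2-\tau_1)^\alpha\leq c(\tau_2-\tau_1)^\gamma$ since $\gamma<1/4<\alpha$. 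The distributed-drift new segment is even simpler because $B$ is bounded.

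For each ``modulation'' piece I split
\[
\bigl(e^{A(\tau_2-\tau_1)}-I\bigr)(\lambda-A)e^{A(\tau_1-s)}N = (\lambda-A)^{1-\alpha+\gamma}e^{A(\tau_1-s)}\cdot(\lambda-A)^{-\gamma}\bigl(e^{A(\tau_2-\tau_1)}-I\bigr)\cdot(\lambda-A)^\alpha N,
\]
using that $(\lambda-A)^{-\gamma}$ commutes with the semigroup. Lemma \ref{fractionalprop}(vii) applied to $(\lambda-A)^{-\gamma}w$ yields $\|(\lambda-A)^{-\gamma}(e^{A(\tau_2-\tau_1)}-I)\|_{\mathcal L(H)}\leq c(\tau_2-\tau_1)^\gamma$, and the remaining factor is controlled by Lemma \ref{fractionalprop}(vi) with singularity $(\tau_1-s)^{\alpha-1-\gamma}$, which is integrable because $\alpha>1/2+\gamma$ is achievable for $\gamma<1/4$ and $\alpha$ close to $3/4$. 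For the stochastic integrals I apply the It\^o isometry before using these bounds, which squares the singularity $(\tau_1-s)^{\alpha-1-\gamma}$ and requires $2(1-\alpha+\gamma)<1$, i.e.\ $\alpha>1/2+\gamma$, exactly the same threshold. The smoothness of $G$ from Assumption \ref{noiseoperator} handles the analogous estimates for the distributed noise, where the regularity $\beta/2$ takes the place of $\alpha$ and the constraint $\gamma\leq\beta/2$ is used.

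The main obstacle will be ensuring consistency of the fractional-power bookkeeping in the boundary-noise modulation term: three exponents $1-\alpha$ (from boundary lift), $\gamma$ (from Hölder continuity of the semigroup) and $\alpha$ (absorbed by $N$) must coexist under the single constraint $\alpha>\tfrac12+\gamma$, and this is precisely what forces $\gamma<1/4$ in the statement. Once all five contributions are bounded by $c\,(\tau_2-\tau_1)^\gamma(1+\|\xi\|_{L^2(\Omega;D((\lambda-A)^{\beta/2}))})$, the triangle inequality concludes the proof.
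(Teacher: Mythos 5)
Your proposal follows essentially the same route as the paper: the same five-term split of the mild formulation, the same further decomposition of each integral into a new segment over $[\tau_1,\tau_2]$ and an old segment modulated by $e^{A(\tau_2-\tau_1)}-I$, the same use of Lemma \ref{fractionalprop}(vi)--(vii) together with the boundedness of $(\lambda-A)^{\alpha}N$ for $\alpha\in(1/2,3/4)$, the It\^o isometry for the noise terms, and Lemma \ref{mildsolbound} for the drift terms; the exponent bookkeeping (in particular $\alpha>1/2+\gamma$ with $\alpha<3/4$ forcing $\gamma<1/4$ in the boundary-noise term) matches the paper's. The only cosmetic deviation is that the constraint $\gamma\le\beta/2$ is in fact consumed by the initial-value term (which you also handle correctly) rather than by the distributed-noise term, but this does not affect the validity of the argument.
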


\begin{proof}
 By definition, we get
 \begin{equation}\label{inequal1}
  \|y(\tau_2)-y(\tau_1)\|_{L^2(\Omega;\,H)} \leq \I_1 + \I_2 + \I_3 + \I_4 + \I_5,
 \end{equation}
 where
 \begin{align*}
  \I_1 &= \left\| \left[e^{A \tau_2}-e^{A \tau_1}\right] \xi \right\|_{L^2(\Omega;\,H)}, \\
  \I_2 &= \left\| \int\limits_0^{\tau_1} \left[ e^{A (\tau_2-s)} - e^{A (\tau_1-s)} \right] B R^{-1} B^* \mathcal P(s) y(s) \, \ds + \int\limits_{\tau_1}^{\tau_2} e^{A (\tau_2-s)} B R^{-1} B^* \mathcal P(s) y(s) \, \ds \right\|_{L^2(\Omega;\,H)}, \\
  \I_3 &= \left\| \int\limits_0^{\tau_1} (\lambda-A) \left[ e^{A (\tau_2-s)} - e^{A (\tau_1-s)} \right] N R_b^{-1} \mathcal G^* \mathcal H(s) y(s) \, \ds \right. \\
  &\hspace*{0.6cm} \left. + \int\limits_{\tau_1}^{\tau_2} (\lambda-A) e^{A (\tau_2-s)} N R_b^{-1} \mathcal G^* \mathcal H(s) y(s) \, \ds \right\|_{L^2(\Omega;\,H)}, \\
  \I_4 &= \left\| \int\limits_0^{\tau_1} \left[ e^{A (\tau_2-s)} - e^{A (\tau_1-s)} \right] G \, \mathrm{d}W(s) + \int\limits_{\tau_1}^{\tau_2} e^{A (\tau_2-s)} G \, \mathrm{d}W(s) \right\|_{L^2(\Omega;\,H)}, \\
  \I_5 &= \left\| \int\limits_0^{\tau_1} (\lambda-A) \left[ e^{A (\tau_2-s)} - e^{A (\tau_1-s)} \right] N \, \mathrm{d}W_b(s) + \int\limits_{\tau_1}^{\tau_2} (\lambda-A) e^{A (\tau_2-s)} N \, \mathrm{d}W_b(s) \right\|_{L^2(\Omega;\,H)}.
 \end{align*}
 Recall that the semigroup $(e^{A t})_{t \geq 0}$ is a contraction. By Lemma \ref{fractionalprop} (ii) and (vii), we obtain
 \begin{align}\label{inequal2}
  \I_1 &= \left\| \left[e^{A (\tau_2-\tau_1)}-I\right] e^{A \tau_1} \xi \right\|_{L^2(\Omega;\,H)} \nonumber \\
  &\leq c_\gamma (\tau_2-\tau_1)^\gamma \left\| e^{A \tau_1} (\lambda-A)^\gamma \xi \right\|_{L^2(\Omega;\,H)} \nonumber \\
  &\leq c_\gamma (\tau_2-\tau_1)^\gamma \left\| \xi \right\|_{L^2(\Omega;\,D((\lambda-A)^{\beta/2}))}.
 \end{align}
 Recall that the operators $B, R^{-1}, \mathcal P(t)$ are linear and bounded.
 Using Lemma \ref{fractionalprop} (i) and (iv) -- (vii) and Lemma \ref{mildsolbound}, we have that
 \begin{align}\label{inequal3}
  \I_2 &\leq \int\limits_0^{\tau_1} \left\| \left[ e^{A (\tau_2-\tau_1)} - I \right] e^{A (\tau_1-s)}  B R^{-1} B^* \mathcal P(s) y(s) \right\|_{L^2(\Omega;\,H)} \ds \nonumber \\
  &\quad + \int\limits_{\tau_1}^{\tau_2} \left\| (\lambda-A)^{1-\gamma} e^{A (\tau_2-s)} (\lambda-A)^{\gamma-1} B R^{-1} B^* \mathcal P(s) y(s) \right\|_{L^2(\Omega;\,H)} \ds \nonumber \\
  &\leq c \left[ (\tau_2-\tau_1)^\gamma \int\limits_0^{\tau_1} (\tau_1-s)^{-\gamma} \left\| y(s) \right\|_{L^2(\Omega;\,H)} \ds + \int\limits_{\tau_1}^{\tau_2} (\tau_2-s)^{\gamma-1} \left\| y(s) \right\|_{L^2(\Omega;\,H)} \ds \right] \nonumber \\
  &\leq c \, (\tau_2-\tau_1)^\gamma \left(1 + \left\|\xi \right\|_{L^2(\Omega;\,D((\lambda-A)^{\beta/2})}\right).
 \end{align}
 Recall that the operators $(\lambda-A)^\alpha N, R_b^{-1}, \mathcal G^*, \mathcal H(t)$ with $\alpha \in (0,3/4)$ are linear and bounded.
 Lemma \ref{fractionalprop} (iv) -- (vii) and Lemma \ref{mildsolbound} give us for all $\alpha \in (\gamma,3/4)$
 \begin{align}\label{inequal4}
  \I_3 &\leq  \int\limits_0^{\tau_1} \left\| \left[ e^{A (\tau_2-\tau_1)} - I \right] (\lambda-A)^{1-\alpha} e^{A (\tau_1-s)} (\lambda-A)^\alpha N R_b^{-1} \mathcal G^* \mathcal H(s) y(s) \right\|_{L^2(\Omega;\,H)} \ds \nonumber \\
  &\quad + \int\limits_{\tau_1}^{\tau_2} \left\| (\lambda-A)^{1-\gamma} e^{A (\tau_2-s)} (\lambda-A)^\gamma N R_b^{-1} \mathcal G^* \mathcal H(s) y(s) \right\|_{L^2(\Omega;\,H)} \ds \nonumber \\
  &\leq c \, (\tau_2-\tau_1)^\gamma \int\limits_0^{\tau_1} (\tau_1-s)^{\alpha-\gamma-1} \left\| y(s) \right\|_{L^2(\Omega;\,H)} \ds + c \int\limits_{\tau_1}^{\tau_2} (\tau_2-s)^{\gamma-1} \left\| y(s) \right\|_{L^2(\Omega;\,H)} \ds \nonumber \\
  &\leq c \, (\tau_2-\tau_1)^\gamma \left(1 + \left\|\xi \right\|_{L^2(\Omega;\,D((\lambda-A)^{\beta/2})}\right).
 \end{align}
 The It\^o isometry and Lemma \ref{fractionalprop} (ii) and (iv) -- (vii) yield
 \begin{align}\label{inequal5}
  \I_4^2 &\leq 2 \, \mathbb E \int\limits_0^{\tau_1} \left\| \left[ e^{A (\tau_2-\tau_1)} - I \right] e^{A (\tau_1-s)} G \right\|_{\mathcal L_{HS}(Q^{1/2}(H);\,H)}^2 \ds \nonumber \\
  &\quad + 2 \, \mathbb E \int\limits_{\tau_1}^{\tau_2} \left\| (\lambda-A)^{1/2-\gamma} e^{A (\tau_2-s)} (\lambda-A)^{\gamma-1/2} G \right\|_{\mathcal L_{HS}(Q^{1/2}(H);\,H)}^2 \ds \nonumber \\
  &\leq c \left[ (\tau_2-\tau_1)^{2\gamma} \int\limits_0^{\tau_1} (\tau_1-s)^{-2\gamma} \ds \right. \nonumber \\
  &\qquad \quad \left. + \int\limits_{\tau_1}^{\tau_2} (\tau_2-s)^{2\gamma-1} \ds \right] \mathbb E  \left\| (\lambda-A)^{-(\beta-1)/2} (\lambda-A)^{(\beta-1)/2} G \right\|_{\mathcal L_{HS}(Q^{1/2}(H);\,H)}^2\nonumber \\
  &\leq c \, (\tau_2-\tau_1)^{2\gamma} \, \mathbb E  \left\| G \right\|_{\mathcal L_{HS}(Q^{1/2}(H);\,D((\lambda-A)^{(\beta-1)/2}))}^2.
 \end{align}
 Using the It\^o isometry and Lemma \ref{fractionalprop} (iv) -- (vii), we get for all $\alpha \in (\gamma,3/4)$
 \begin{align}\label{inequal6}
  \I_5^2 &\leq \int\limits_0^{\tau_1} \left\| \left[ e^{A (\tau_2-\tau_1)} - I \right] (\lambda-A)^{1-\alpha} e^{A (\tau_1-s)} (\lambda-A)^\alpha N \right\|_{\mathcal L_{HS}(Q_b^{1/2}(H_b);\,H)}^2 \ds \nonumber \\
  &\quad + \int\limits_{\tau_1}^{\tau_2} \left\| (\lambda-A)^{1/2-\gamma} e^{A (\tau_2-s)} (\lambda-A)^{1/2+\gamma} N \right\|_{\mathcal L_{HS}(Q_b^{1/2}(H_b);\,H)}^2 \ds \nonumber \\
  &\leq c \left[ (\tau_2-\tau_1)^{2\gamma} \int\limits_0^{\tau_1} (\tau_1-s)^{2\alpha-2\gamma-2} \ds + \int\limits_{\tau_1}^{\tau_2} (\tau_2-s)^{2\gamma-1} \ds \right] \nonumber \\
  &\leq c \, (\tau_2-\tau_1)^{2\gamma}.
 \end{align}
 Substituting the inequalities \eqref{inequal2}--\eqref{inequal6} in \eqref{inequal1} yields the result.
\end{proof}

\subsection{Continuity of mild solutions to the Riccati equation}

We also need similar continuity properties of the mild solution $\mathcal P$ to the Riccati equation~\eqref{riccati} and the transformed version $\mathcal H = (\lambda-A)^{1-\alpha}\mathcal P$.
In the following, we use $c > 0$ as a generic constant that may changes from time to time. 

\begin{lemma}\label{riccaticontinuity1}
 Let $(\mathcal P(t))_{t \in [0,T]}$ be the mild solution of system \eqref{riccati}.
 Then there exists a constant $c > 0$ such that for all $\tau_1,\tau_2 \in [0,T]$ with $\tau_1 < \tau_2$ and all $\gamma \in (0,1)$
 \begin{equation*}
  \|\mathcal P(\tau_2)-\mathcal P(\tau_1)\|_{\mathcal L (H)} \leq c \, (\tau_2-\tau_1)^\gamma.
 \end{equation*}
\end{lemma}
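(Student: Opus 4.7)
The plan is to use the mild formulation \eqref{riccatimild} of the Riccati equation to write $\mathcal P(\tau_1) y - \mathcal P(\tau_2) y$ as a sum of two pieces, one coming from the additional interval of integration $[\tau_1,\tau_2]$ and one coming from the shift of the semigroup argument on $[\tau_2,T]$. Abbreviating
\[
 K(s) := \mathcal P(s) B R^{-1} B^*\mathcal P(s) + \mathcal H^*(s)\mathcal G R_b^{-1}\mathcal G^*\mathcal H(s) - C^*C,
\]
formula \eqref{riccatimild} reads $\mathcal P(t) y = -\int_t^T e^{A(s-t)} K(s) e^{A(s-t)} y \, \ds$. Invoking the semigroup identity $e^{A(s-\tau_1)} = e^{A(s-\tau_2)} e^{A(\tau_2-\tau_1)}$ for $s \geq \tau_2$, I obtain $\mathcal P(\tau_1) y - \mathcal P(\tau_2) y = J_1 + J_2$, with
\begin{align*}
 J_1 &= -\int_{\tau_1}^{\tau_2} e^{A(s-\tau_1)} K(s) e^{A(s-\tau_1)} y \,\ds, \\
 J_2 &= -\int_{\tau_2}^T \bigl[ e^{A(s-\tau_1)} K(s) e^{A(s-\tau_1)} - e^{A(s-\tau_2)} K(s) e^{A(s-\tau_2)} \bigr] y \,\ds.
\end{align*}

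A preparatory step is to establish that $M:=\sup_{s\in[0,T]}\|K(s)\|_{\mathcal L(H)} < \infty$. The terms $\mathcal P(s) B R^{-1} B^*\mathcal P(s)$ and $C^*C$ are uniformly bounded because $\mathcal P \in C([0,T];\Sigma(H))$ and $B,R^{-1},C$ are bounded. The remaining term is rewritten as $(\mathcal G^*\mathcal H(s))^* R_b^{-1}(\mathcal G^*\mathcal H(s))$, whose bound reduces to uniform boundedness of $\mathcal G^*\mathcal H(s)\in\mathcal L(H;H_b)$. This is obtained via duality from $\mathcal G = (\lambda-A)^\alpha N \in\mathcal L(H_b;H)$ together with the regularity $(\lambda-A)^{1-\alpha}\mathcal P \in C([0,T);\mathcal L(H))$ from Definition~\ref{riccatidef}, complemented by $\mathcal P(T)=0$ to rule out a singularity at the terminal time.

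For $J_1$, the contractivity of $(e^{A t})_{t\ge 0}$ and $\|K(s)\|_{\mathcal L(H)}\le M$ give $\|J_1 y\|_H \le M(\tau_2-\tau_1)\|y\|_H$, which is in fact Lipschitz in $\tau_2-\tau_1$. For $J_2$, I use the algebraic identity $ab - cd = (a-c)b + c(b-d)$ so that each difference $e^{A(s-\tau_1)}-e^{A(s-\tau_2)}$ factors as $[e^{A(\tau_2-\tau_1)}-I]e^{A(s-\tau_2)}$ by commutativity of the semigroup. To each such factor I apply Lemma~\ref{fractionalprop}(vii), producing $c_\gamma(\tau_2-\tau_1)^\gamma \|(\lambda-A)^\gamma e^{A(s-\tau_2)} z\|_H$; then Lemma~\ref{fractionalprop}(vi) absorbs $(\lambda-A)^\gamma e^{A(s-\tau_2)}$ into $M_\gamma (s-\tau_2)^{-\gamma}$. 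Combining these with $\|K(s)\|_{\mathcal L(H)}\le M$ and integrating yields
\[
 \|J_2 y\|_H \le c\, (\tau_2-\tau_1)^\gamma \|y\|_H \int_{\tau_2}^T (s-\tau_2)^{-\gamma}\,\ds \le c'(\tau_2-\tau_1)^\gamma \|y\|_H,
\]
where the last integral is finite precisely because $\gamma < 1$, which explains the restriction in the statement.

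The main obstacle I anticipate is the uniform bound on $\mathcal G^*\mathcal H(s)$: formally this composition involves $(\lambda-A)^\alpha(\lambda-A)^{1-\alpha}=\lambda-A$ acting on $\mathcal P(s) y$, which is not well-defined pointwise for general $y\in H$ and therefore must be interpreted via duality and the regularity built into the mild-solution concept of Definition~\ref{riccatidef} together with the standard Riccati theory from \cite{Bensoussan07}. Once this bound is secured, the rest of the argument is a routine exploitation of the analytic-semigroup smoothing/H\"older trade-off furnished by Lemma~\ref{fractionalprop}(vi) and (vii).
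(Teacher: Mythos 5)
Your proposal is correct and follows essentially the same route as the paper: both split the mild formula into the extra integral over $[\tau_1,\tau_2]$ and the semigroup-shift difference over $[\tau_2,T]$, factor the latter as $[e^{A(\tau_2-\tau_1)}-I]e^{A(s-\tau_2)}$, and combine Lemma~\ref{fractionalprop}(vii) with the smoothing bound (vi) to trade $(\tau_2-\tau_1)^\gamma$ against an integrable singularity $(s-\tau_2)^{-\gamma}$, with the uniform bound on the quadratic terms taken for granted exactly as the paper does. The only (harmless) deviations are that you bound the $[\tau_1,\tau_2]$ piece directly by contractivity, obtaining the even stronger Lipschitz estimate, where the paper inserts $(\lambda-A)^{1-\gamma}e^{A(s-\tau_1)}(\lambda-A)^{\gamma-1}$ to get $(s-\tau_1)^{\gamma-1}$, and that you group terms by time interval rather than by the operators $\mathcal J$ and $\mathcal K$.
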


\begin{proof}
 Let $y \in H$.
 We set for all $t \in [0,T]$
 \begin{align*}
  &\mathcal J(t) = \mathcal{P}(t)B R^{-1} B^* \mathcal{P}(t), &\mathcal K(t) = \mathcal{H}^*(t) \mathcal{G} R_b^{-1}\mathcal{G}^* \mathcal{H}(t) - C^*C.
 \end{align*}
 Note that the operators $\mathcal J(t)$ and $\mathcal K(t)$ are linear and bounded.
 By definition, we get
 \begin{equation}\label{in1}
  \|\mathcal P(\tau_2)y-\mathcal P(\tau_1)y\|_H \leq \I_1 + \I_2,
 \end{equation}
 where 
 \begin{align*}
  \I_1 &= \left\| \int\limits_{\tau_2}^T \left[ e^{A(s-\tau_2)} - e^{A(s-\tau_1)} \right] \mathcal J(s) e^{A(s-\tau_2)}y \, \ds \right\|_H + \left\| \int\limits_{\tau_2}^T e^{A(s-\tau_1)} \mathcal J(s) \left[ e^{A(s-\tau_2)} - e^{A(s-\tau_1)} \right] y \, \ds \right\|_H \\
  &\quad + \left\| \int\limits_{\tau_1}^{\tau_2} e^{A(s-\tau_1)}\mathcal J(s)e^{A(s-\tau_1)}y \, \ds \right\|_H 
 \end{align*}
and
 \begin{align*}
  \I_2 &= \left\| \int\limits_{\tau_2}^T \left[ e^{A(s-\tau_2)} - e^{A(s-\tau_1)} \right] \mathcal K(s) e^{A(s-\tau_2)}y \, \ds \right\|_H + \left\| \int\limits_{\tau_2}^T e^{A(s-\tau_1)}\mathcal K(s) \left[ e^{A(s-\tau_2)} - e^{A(s-\tau_1)} \right] y \, \ds \right\|_H \\
  &\quad + \left\| \int\limits_{\tau_1}^{\tau_2} e^{A(s-\tau_1)} \mathcal K(s) e^{A(s-\tau_1)}y \, \ds \right\|_H.
 \end{align*}
 Recall that the semigroup $(e^{A t})_{t \geq 0}$ is a contraction and that the operators $\mathcal P(t)$, $B$ and $R^{-1}$ are linear and bounded for all $t \in [0,T]$.
 Lemma \ref{fractionalprop} (i) and (iv) -- (vii) give us
 \begin{align}\label{in2}
  \I_1 &= \int\limits_{\tau_2}^T \left\| \left[ I - e^{A(\tau_2-\tau_1)} \right] e^{A(s-\tau_2)} \mathcal J(s) e^{A(s-\tau_2)}y \right\|_H \ds + \int\limits_{\tau_2}^T \left\| e^{A(s-\tau_1)} \mathcal J(s) \left[ I - e^{A(\tau_2-\tau_1)} \right] e^{A(s-\tau_2)} y \right\|_H \ds \nonumber \\
  &\quad + \int\limits_{\tau_1}^{\tau_2} \left\| (\lambda-A)^{1-\gamma} e^{A(s-\tau_1)} (\lambda-A)^{\gamma-1} \mathcal J(s) e^{A(s-\tau_1)}y \right\|_H \ds \nonumber \\
  &\leq c \left[ (\tau_2-\tau_1)^\gamma \int\limits_{\tau_2}^T (s-\tau_2)^{-\gamma} \, \ds + \int\limits_{\tau_1}^{\tau_2} (s-\tau_1)^{\gamma-1} \, \ds \right] \| y \|_H \nonumber \\
  &\leq c \, (\tau_2-\tau_1)^\gamma \| y \|_H.
 \end{align}
 Recall that the operators $\mathcal H(t)$, $\mathcal G$, $R_b^{-1}$ and $C$ are linear and bounded for all $t \in [0,T]$.
 Similarly to the above, we obtain
 \begin{equation}\label{in3}
  \I_2 \leq c \, (\tau_2-\tau_1)^\gamma \| y \|_H.
 \end{equation}
 Substituting the inequalities \eqref{in2} and \eqref{in3} in \eqref{in1} yields the result. 
\end{proof}

\begin{lemma}\label{riccaticontinuity2}
 Let $(\mathcal H(t))_{t \in [0,T]}$ be given by
 \begin{equation*}
  \mathcal H(t) = (\lambda-A)^{1-\alpha}\mathcal P(t)
 \end{equation*}  
 for $\alpha \in (1/2,3/4)$, where $(\mathcal P(t))_{t \in [0,T]}$ is the mild solution of system \eqref{riccati}.
 Then there exists a constant $c > 0$ such that for all $\tau_1,\tau_2 \in [0,T)$ with $\tau_1 < \tau_2$ and all $\gamma \in (0,\alpha)$,
 \begin{equation*}
  \|\mathcal H(\tau_2)-\mathcal H(\tau_1)\|_{\mathcal L (H)} \leq c \, (\tau_2-\tau_1)^\gamma.
 \end{equation*}
\end{lemma}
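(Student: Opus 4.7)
The plan is to mimic the proof of Lemma \ref{riccaticontinuity1}, but applied to the representation obtained by composing $(\lambda-A)^{1-\alpha}$ with the mild formula \eqref{riccatimild} for $\mathcal{P}$. Writing $\mathcal{J}(s) = \mathcal{P}(s)BR^{-1}B^*\mathcal{P}(s)$ and $\mathcal{K}(s) = \mathcal{H}^*(s)\mathcal{G} R_b^{-1}\mathcal{G}^*\mathcal{H}(s) - C^*C$ as before, one has for every $y \in H$
\begin{equation*}
 \mathcal{H}(t)y = -\int_t^T (\lambda-A)^{1-\alpha} e^{A(s-t)} \mathcal{J}(s) e^{A(s-t)} y \, \ds - \int_t^T (\lambda-A)^{1-\alpha} e^{A(s-t)} \mathcal{K}(s) e^{A(s-t)} y \, \ds,
\end{equation*}
and both $\mathcal{J}(s)$ and $\mathcal{K}(s)$ are bounded in $\mathcal{L}(H)$ uniformly on $[0,T)$ (the latter because $\mathcal{H} \in C([0,T);\mathcal{L}(H))$ remains bounded in any sub-interval of interest, via the Definition \ref{riccatidef} and the mild formula together with a Gronwall argument analogous to Lemma \ref{mildsolbound}). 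Then I split $\mathcal{H}(\tau_2)y - \mathcal{H}(\tau_1)y$ exactly along the same six pieces as in the proof of Lemma \ref{riccaticontinuity1}: for each of the two terms (the $\mathcal{J}$-term and the $\mathcal{K}$-term) we obtain a contribution on $[\tau_2,T]$ where the semigroup difference sits on the \emph{left} of the middle operator, one on $[\tau_2,T]$ where it sits on the \emph{right}, and one on $[\tau_1,\tau_2]$ with no semigroup difference.

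For the new-interval piece on $[\tau_1,\tau_2]$, Lemma \ref{fractionalprop} (vi) bounds $\|(\lambda-A)^{1-\alpha}e^{A(s-\tau_1)}\|_{\mathcal L(H)}$ by $c(s-\tau_1)^{\alpha-1}$, and integrating $(s-\tau_1)^{\alpha-1}$ over $[\tau_1,\tau_2]$ yields $c(\tau_2-\tau_1)^\alpha \leq cT^{\alpha-\gamma}(\tau_2-\tau_1)^\gamma$ since $\gamma \leq \alpha$. For the two old-interval pieces on $[\tau_2,T]$, I rewrite the semigroup difference as $e^{A(s-\tau_2)} - e^{A(s-\tau_1)} = [I - e^{A(\tau_2-\tau_1)}]e^{A(s-\tau_2)}$ and use Lemma \ref{fractionalprop} (vii) to generate the Hölder factor $(\tau_2-\tau_1)^\gamma$ at the price of inserting a $(\lambda-A)^\gamma$. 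Combining this with the $(\lambda-A)^{1-\alpha}$ factor via Lemma \ref{fractionalprop} (iv) and (vi) produces a kernel controlled by $(\tau_2-\tau_1)^\gamma (s-\tau_2)^{\alpha-1-\gamma}$, which is integrable on $[\tau_2,T]$ precisely when $\gamma < \alpha$.

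This last integrability requirement is exactly where the restriction $\gamma \in (0,\alpha)$ enters, and it is the only substantive difference compared with Lemma \ref{riccaticontinuity1} (which merely needed $\gamma < 1$). The main obstacle is therefore bookkeeping: correctly commuting $(\lambda-A)^{1-\alpha}$ past the semigroup difference on appropriate domains so that one ends up with a composite fractional power $(\lambda-A)^{1-\alpha+\gamma}e^{A(s-\tau_2)}$, which is bounded by $c(s-\tau_2)^{\alpha-\gamma-1}$ by Lemma \ref{fractionalprop} (vi), and then verifying that the power $\alpha-\gamma-1 > -1$ is strictly above the Lebesgue integrability threshold. Putting the six pieces together and taking suprema over $\|y\|_H \leq 1$ yields the asserted bound.
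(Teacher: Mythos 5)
Your proposal is correct and follows essentially the same route as the paper's proof: the same six-fold decomposition of $\mathcal H(\tau_2)y-\mathcal H(\tau_1)y$ via the mild formula with $\mathcal J$ and $\mathcal K$, Lemma~\ref{fractionalprop}~(vii) to extract the factor $(\tau_2-\tau_1)^\gamma$ at the cost of a power $(\lambda-A)^\gamma$, and Lemma~\ref{fractionalprop}~(vi) to obtain the kernel $(s-\tau_2)^{\alpha-1-\gamma}$, whose integrability is exactly the restriction $\gamma<\alpha$. The only cosmetic deviations are that you bound the $[\tau_1,\tau_2]$ piece by $\int_{\tau_1}^{\tau_2}(s-\tau_1)^{\alpha-1}\,\ds\le c\,(\tau_2-\tau_1)^{\alpha}\le c\,T^{\alpha-\gamma}(\tau_2-\tau_1)^\gamma$ directly, whereas the paper inserts $(\lambda-A)^{\alpha-\gamma}e^{A(s-\tau_1)}(\lambda-A)^{\gamma-\alpha}$ to reach the kernel $(s-\tau_1)^{\gamma-1}$, and that for the term with the semigroup difference to the right of $\mathcal J(s)$ the paper applies (vii) with exponent $1-\alpha+\gamma$ rather than $\gamma$; both variants yield the same final bound.
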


\begin{proof}
 Let $y \in H$.
 We set for all $t \in [0,T]$
 \begin{align*}
  &\mathcal J(t) = \mathcal{P}(t)B R^{-1} B^* \mathcal{P}(t), &\mathcal K(t) = \mathcal{H}^*(t) \mathcal{G} R_b^{-1}\mathcal{G}^* \mathcal{H}(t) - C^*C.
 \end{align*}
 Note that the operators $\mathcal J(t)$ and $\mathcal K(t)$ are linear and bounded.
 By definition, we get
 \begin{equation}\label{i1}
  \|\mathcal H(\tau_2)y-\mathcal H(\tau_1)y\|_H = \| (\lambda-A)^{1-\alpha} \mathcal P(\tau_2)y- (\lambda-A)^{1-\alpha}\mathcal P(\tau_1)y\|_H \leq \I_1 + \I_2,
 \end{equation}
 where 
 \begin{align*}
  \I_1 &= \left\| \int\limits_{\tau_2}^T (\lambda-A)^{1-\alpha} \left[ e^{A(s-\tau_2)} - e^{A(s-\tau_1)} \right] \mathcal J(s) e^{A(s-\tau_2)}y \, \ds \right\|_H \\
  &\quad + \left\| \int\limits_{\tau_2}^T (\lambda-A)^{1-\alpha} e^{A(s-\tau_1)} \mathcal J(s) \left[ e^{A(s-\tau_2)} - e^{A(s-\tau_1)} \right] y \, \ds \right\|_H \\
  &\quad + \left\| \int\limits_{\tau_1}^{\tau_2} (\lambda-A)^{1-\alpha} e^{A(s-\tau_1)} \mathcal J(s) e^{A(s-\tau_1)}y \, \ds \right\|_H
 \end{align*}
 and
 \begin{align*}
  \I_2 &= \left\| \int\limits_{\tau_2}^T (\lambda-A)^{1-\alpha} \left[ e^{A(s-\tau_2)} - e^{A(s-\tau_1)} \right] \mathcal K(s) e^{A(s-\tau_2)}y \, \ds \right\|_H \\
  &\quad + \left\| \int\limits_{\tau_2}^T (\lambda-A)^{1-\alpha} e^{A(s-\tau_1)} \mathcal K(s) \left[ e^{A(s-\tau_2)} - e^{A(s-\tau_1)} \right] y \, \ds \right\|_H \\
  &\quad + \left\| \int\limits_{\tau_1}^{\tau_2} (\lambda-A)^{1-\alpha} e^{A(s-\tau_1)} \mathcal K(s) e^{A(s-\tau_1)}y \, \ds \right\|_H.
 \end{align*}
 Recall that the semigroup $(e^{A t})_{t \geq 0}$ is a contraction and that the operators $\mathcal P(t)$, $B$ and $R^{-1}$ are linear and bounded for all $t \in [0,T]$.
 Lemma \ref{fractionalprop} (i) and (iv) -- (vii) show that
 \begin{align}\label{i2}
  \I_1 &= \int\limits_{\tau_2}^T \left\| \left[ I - e^{A(\tau_2-\tau_1)} \right] (\lambda-A)^{1-\alpha} e^{A(s-\tau_2)} \mathcal J(s) e^{A(s-\tau_2)}y \right\|_H \ds  \nonumber \\
  &\quad + \int\limits_{\tau_2}^T \left\| (\lambda-A)^{1-\alpha} e^{A(s-\tau_1)} \mathcal J(s) \left[ I - e^{A(\tau_2-\tau_1)} \right] e^{A(s-\tau_2)} y \right\|_H \ds \nonumber \\
  &\quad + \int\limits_{\tau_1}^{\tau_2} \left\| (\lambda-A)^{1-\alpha} e^{A(s-\tau_1)} \mathcal J(s) (\lambda-A)^{\alpha-\gamma} e^{A(s-\tau_1)} (\lambda-A)^{\gamma-\alpha} y \right\|_H \ds \nonumber \\
  &\leq c \left[ (\tau_2-\tau_1)^\gamma \int\limits_{\tau_2}^T (s-\tau_2)^{\alpha-1-\gamma} \, \ds \right. \nonumber \\
  &\qquad \quad \left. + (\tau_2-\tau_1)^{-\alpha+1+\gamma} \int\limits_{\tau_2}^T (s-\tau_1)^{\alpha-1} (s-\tau_2)^{\alpha-1-\gamma} \, \ds + \int\limits_{\tau_1}^{\tau_2} (s-\tau_1)^{\gamma-1} \, \ds \right] \| y \|_H \nonumber \\
  &\leq c \, (\tau_2-\tau_1)^\gamma \| y \|_H.
 \end{align}
 Since the operators $\mathcal H(t)$, $\mathcal G$, $R_b^{-1}$ and $C$ are linear and bounded for all $t \in [0,T]$, a very similar argument leads to
 \begin{equation}\label{i3}
  \I_2 \leq c \, (\tau_2-\tau_1)^\gamma \| y \|_H.
 \end{equation}
 Substituting the inequalities \eqref{i2} and \eqref{i3} in \eqref{i1} yields the result.
\end{proof}

\subsection{Discretized solution operators}
Finally, we collect some results that compare the spatially discretized solution operator $e^{A_h t}$ to the exact solution operator $e^{A t}$, and to the fully discretized time stepping operator $S_{h,\Delta t}$.

\begin{lemma}\label{spatialprop}
 There exists a constant $c>0$ such that
 \begin{enumerate}
  \item for every $y \in D((\lambda-A)^{\rho/2})$ with $\rho,r \in [0,2]$ satisfying $\rho \leq r$ and all $t>0$:
   \begin{equation*}
    \left\|e^{A t} y- e^{A_h t} P_h y \right\|_H \leq c \,  h^r t^{-(r-\rho)/2}\|y\|_{\rho/2};
   \end{equation*}
  \item for every $y \in D((\lambda-A)^{-\rho/2})$ with $\rho \in [0,1]$ and all $t>0$:
  \begin{equation*}
    \left\|e^{A t} y- e^{A_h t} P_h y \right\|_H \leq c \,  h^{2-\rho} t^{-1}\|y\|_{-\rho/2};
   \end{equation*}
  \item for every $y \in D((\lambda-A)^\alpha)$ with $\alpha \in [1/2,1]$ and all $t>0$:
  \begin{equation*}
    \left\|(\lambda-A) e^{A t} y- e^{A_h t} (\lambda-A_h) R_h y \right\|_H \leq c \,  h^{2\alpha} t^{-1} \|y\|_\alpha.
   \end{equation*}
 \end{enumerate}
\end{lemma}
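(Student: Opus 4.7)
The plan is to prove part (i) directly, derive part (ii) from (i) by a duality argument exploiting the self-adjointness of the error operator $F_h(t) := e^{At} - e^{A_h t}P_h$, and reduce part (iii) to (ii) via the commutation identity \eqref{projectionequation}. Together, these are classical spatial finite-element error bounds for the parabolic problem (cf.\ \cite[Chapter 3]{Thomee06} and the adaptation in \cite{Lord13}), rephrased here in terms of the fractional-power spaces $D((\lambda-A)^{s/2})$.

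For part (i), I would first establish the smooth-data endpoint $(\rho, r) = (0, 2)$. Writing
$$e^{At}y - e^{A_h t}P_h y = (I - P_h)e^{At}y + \bigl[P_h e^{At}y - e^{A_h t}P_h y\bigr],$$
the first term is bounded by \eqref{projectionrate} applied with exponent $2$ together with Lemma \ref{fractionalprop}(vi), which yields $c h^2 t^{-1}\|y\|_H$. The second term satisfies a semi-discrete parabolic error equation driven by the projection residual and can be controlled to the same order by the standard energy/Duhamel analysis. The opposite endpoint $\rho = r = 0$ is immediate from $\|e^{At}\|_{\mathcal L(H)}, \|e^{A_h t}\|_{\mathcal L(Y_h)}, \|P_h\|_{\mathcal L(H)} \leq 1$. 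Interpolation in the fractional scale then yields the full range $0 \leq \rho \leq r \leq 2$.

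For part (ii), since $A$ is the self-adjoint Neumann Laplacian, $A_h$ is symmetric via its variational definition on $Y_h$, and $P_h$ is the orthogonal $L^2$-projection, a direct computation shows that $F_h(t)$ is self-adjoint on $H$. Exploiting this, Hilbert-space duality together with the identification $D((\lambda-A)^{-\rho/2})^* = D((\lambda-A)^{\rho/2})$ gives
$$\|F_h(t)\|_{\mathcal L(D((\lambda-A)^{-\rho/2});\,H)} = \bigl\|(\lambda-A)^{\rho/2}F_h(t)\bigr\|_{\mathcal L(H)}.$$
The operator norm on the right is estimated by applying the splitting $F_h(t) = F_h(t/2)e^{At/2} + e^{A_h t/2}P_h F_h(t/2)$, invoking Lemma \ref{fractionalprop}(v)--(vi), using the norm equivalence of $(\lambda-A)^{\rho/2}$ and $(\lambda-A_h)^{\rho/2}$ on $Y_h$ (valid for $\rho \in [0,1]$ since $Y_h \subset D((\lambda-A)^{1/2})$), and combining with part (i) at $r = 2-\rho$, yielding the bound $c h^{2-\rho}t^{-1}$.

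For part (iii), I first take $y \in D(A)$. Lemma \ref{fractionalprop}(v) gives $(\lambda-A)e^{At}y = e^{At}z$ with $z := (\lambda-A)y$, and \eqref{projectionequation} gives $e^{A_h t}(\lambda-A_h)R_h y = e^{A_h t}P_h z$. Hence
$$(\lambda-A)e^{At}y - e^{A_h t}(\lambda-A_h)R_h y = F_h(t)z.$$
For $\alpha \in [1/2,1]$ we have $z \in D((\lambda-A)^{\alpha-1}) = D((\lambda-A)^{-\rho/2})$ with $\rho := 2(1-\alpha) \in [0,1]$ and $\|z\|_{-\rho/2} = \|y\|_\alpha$. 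Applying part (ii) gives $ch^{2\alpha}t^{-1}\|y\|_\alpha$, and a density argument using the $Y$-continuity of $R_h$ extends the estimate to all $y \in D((\lambda-A)^\alpha)$. The main obstacle is the smooth-data endpoint in (i), which requires the classical but technical parabolic finite-element analysis; once this and the fractional-scale duality manipulations in (ii) are in hand, the reduction in (iii) is essentially bookkeeping.
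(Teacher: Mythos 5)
Your proof of part (iii) is exactly the paper's: reduce to part (ii) with $\rho = 2-2\alpha$ via the commutation identity \eqref{projectionequation} and Lemma~\ref{fractionalprop}(iv)--(v), noting $\|(\lambda-A)^{\alpha-1}(\lambda-A)y\|_H = \|y\|_\alpha$, and then extend by density. For parts (i) and (ii) the paper does not give proofs but cites \cite[Lemma 3.1]{Lord13}, \cite[Theorem 3.5]{Thomee06} and \cite[Lemma 3.2(iii)]{Tambue19}, and your sketches (the classical smooth/nonsmooth-data estimates plus interpolation for (i), and the duality argument exploiting the self-adjointness of $e^{At}-e^{A_h t}P_h$ for (ii)) are the standard arguments behind those citations, so the overall route is the same.
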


\begin{proof}
 A proof of (i) can be found in \cite[Lemma 3.1]{Lord13} for $r \in \{1,2\}$.
 For $r=0$, the inequality is an immediate consequence of the fact that the semigroups $(e^{A t})_{t \geq 0}$ and $(e^{A_h t})_{t \geq 0}$ are contractions.
 The result holds for all $r \in [0,2]$ applying interpolation techniques, which is demonstrated in \cite[Theorem 3.5]{Thomee06}.
 For the assertion (ii), we can follow \cite[Lemma 3.2 (iii)]{Tambue19}.
 It remains to show (iii).
 Let us first assume that $y \in D(A)$.
 By equation \eqref{projectionequation}, Lemma \ref{fractionalprop} (iv) and (v), and claim (ii) with $\rho = 2-2\alpha$, we obtain
 \begin{align*}
  \left\|(\lambda-A) e^{A t} y- e^{A_h t} (\lambda-A_h) R_h y \right\|_H &= \left\| e^{A t} (\lambda-A) y- e^{A_h t} P_h (\lambda-A) y \right\|_H \\
  &\leq c \,  h^{2\alpha} t^{-1} \left\|  (\lambda-A)^{\alpha-1} (\lambda-A) y \right\|_H \\
  &= c \,  h^{2\alpha} t^{-1} \left\| y \right\|_\alpha.
 \end{align*}
 The above inequality holds also for every $y \in D((\lambda-A)^\alpha)$ with $\alpha \in [1/2,1]$ by standard density arguments.
 Indeed, for every $y \in D((\lambda-A)^\alpha)$, there exists a sequence $(y_k)_{k \in \mathbb N} \subset D(A)$ such that $y_k \rightarrow y$ in $D((\lambda-A)^\alpha)$ as $k \rightarrow \infty$ resulting from Lemma \ref{fractionalprop} (ii) and (iii).
 Due to Lemma \ref{fractionalprop} (iv) -- (vi), we get for each $k \in \mathbb N$
 \begin{align*}
  &\left\|(\lambda-A) e^{A t} y- e^{A_h t} (\lambda-A_h) R_h y \right\|_H \\
  &= \left\|(\lambda-A) e^{A t} (y- y_k) - e^{A_h t} (\lambda-A_h) R_h (y-y_k) + (\lambda-A) e^{A t} y_k - e^{A_h t} (\lambda-A_h) R_h y_k \right\|_H \\
  &\leq \left\|(\lambda-A)^{1-\alpha} e^{A t} (\lambda-A)^\alpha (y- y_k) \right\|_H + \left\| e^{A_h t} (\lambda-A_h) R_h (y-y_k) \right\|_H \\
  &\quad + \left\| (\lambda-A) e^{A t} y_k - e^{A_h t} (\lambda-A_h) R_h y_k \right\|_H \\
  &\leq \left( M_{1-\alpha} \, t^{\alpha-1} + c \, M_0 \right) \|y- y_k\|_\alpha + c \,  h^{2\alpha} t^{-1} \left\| y_k \right\|_\alpha \\
  &\leq \left( M_{1-\alpha} \, t^{\alpha-1} + c \, M_0 \right) \|y- y_k\|_\alpha + c \,  h^{2\alpha} t^{-1} \left\| y_k - y \right\|_\alpha + c \,  h^{2\alpha} t^{-1} \left\| y \right\|_\alpha.
 \end{align*}
 Hence, the result follows as $k \rightarrow \infty$.
\end{proof}

\begin{lemma}[Theorem 6.1, \cite{Fujita76}]\label{contraction}
 For each $m=1,...,M$, we have
 \begin{equation*}
  \left\| S_{h,\Delta t}^m \right\|_{\mathcal L(H)} \leq 1,
 \end{equation*}
 where $S_{h,\Delta t}^m$ denotes the composition of $S_{h,\Delta t}$ with itself $m$ times.
\end{lemma}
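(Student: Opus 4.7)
The plan is to prove the single-step bound $\|S_{h,\Delta t}\|_{\mathcal L(H)} \leq 1$ by a short energy argument, and then conclude for arbitrary $m$ by submultiplicativity of the operator norm. The key observation is that $A_h$ is self-adjoint and negative semidefinite on the finite-dimensional subspace $Y_h \subset H$ equipped with the inherited $H$-inner product: the defining relation $\langle A_h y, z\rangle_H = \langle A y, z\rangle_H$ for $y,z \in Y_h$ is to be read via the symmetric Dirichlet form associated with the Neumann Laplacian, so that $\langle A_h y, z\rangle_H = -\langle \nabla y, \nabla z\rangle_H$. This is symmetric in $y,z$ and taking $y=z$ gives $\langle A_h y, y\rangle_H = -\|\nabla y\|_H^2 \leq 0$.

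Given $y \in Y_h$, set $z = S_{h,\Delta t} y$, so that $(I - \Delta t \, A_h) z = y$. Pairing with $z$ in the $H$-inner product yields
\begin{equation*}
 \|z\|_H^2 - \Delta t\, \langle A_h z, z\rangle_H = \langle y, z\rangle_H.
\end{equation*}
Because $-\Delta t\, \langle A_h z, z\rangle_H \geq 0$, Cauchy--Schwarz gives $\|z\|_H^2 \leq \langle y, z\rangle_H \leq \|y\|_H \|z\|_H$ and hence $\|z\|_H \leq \|y\|_H$. Thus $\|S_{h,\Delta t}\|_{\mathcal L(Y_h)} \leq 1$ when $Y_h$ carries the $H$-norm, and since $S_{h,\Delta t}$ maps $Y_h$ into itself, submultiplicativity yields $\|S_{h,\Delta t}^m\|_{\mathcal L(Y_h)} \leq 1$ for every $m$. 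The interpretation as an $\mathcal L(H)$ bound follows upon extending the action of $S_{h,\Delta t}^m$ to $H$ in the natural way (for instance composed with $P_h$ or extended by zero on $Y_h^\perp$), which preserves the norm since $\|P_h\|_{\mathcal L(H)} \leq 1$.

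I do not expect any real obstacle here; the argument is essentially trivial once the symmetric dissipative nature of $A_h$ is identified, and the only mild subtlety is notational, namely clarifying in what sense $S_{h,\Delta t}^m$ is being viewed as an element of $\mathcal L(H)$. An equally short alternative would be to invoke the functional calculus for the self-adjoint operator $A_h$: the rational function $r(x) = (1-\Delta t\, x)^{-1}$ satisfies $|r(x)| \leq 1$ for all $x \leq 0$, so $\|r(A_h)^m\|_{\mathcal L(Y_h)} = \sup_{x \in \sigma(A_h)} |r(x)|^m \leq 1$ follows directly from the spectral theorem.
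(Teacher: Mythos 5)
Your argument is correct, and it is worth noting that the paper does not actually prove this lemma at all: it simply cites it as Theorem~6.1 of the reference \cite{Fujita76}. Your energy argument is a complete, self-contained, and entirely standard proof of the single-step bound, and the reduction to that case by submultiplicativity is immediate. Two small remarks. First, you are right that the defining relation $\langle A_h y, z\rangle_H = \langle A y, z\rangle_H$ must be read through the Dirichlet form $-\langle \nabla y,\nabla z\rangle_{L^2}$, since $Y_h \not\subset D(A)$; with that reading, $-\langle A_h z, z\rangle_H = \|\nabla z\|_{L^2}^2 \geq 0$ is exactly what the computation needs (only semidefiniteness, which is all one gets for the Neumann Laplacian, whose kernel contains the constants). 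In fact your Cauchy--Schwarz argument uses only dissipativity of $A_h$, not self-adjointness, so it is slightly more robust than the spectral-calculus alternative you sketch (which does require $A_h$ symmetric, as it is here). Second, your handling of the $\mathcal L(H)$ interpretation is appropriate: in the paper $S_{h,\Delta t}^m$ only ever acts on elements of $Y_h$ (it always appears composed with $P_h$, $B_h = P_h B$, or $B_h^b$), so the bound on $Y_h$ equipped with the $H$-norm, extended through $P_h$ with $\|P_h\|_{\mathcal L(H)} \leq 1$, is exactly what is used downstream. No gaps.
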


\begin{lemma}\label{timeprop}
 There exists a constant $c>0$ such that
 \begin{enumerate}
  \item for every $y \in D((\lambda-A)^{\rho/2})$ with $\rho \in [0,2]$ and each $m=0,1,...,M$:
  \begin{equation*}
   \left\| e^{A_h t_m} P_h y - S_{h,\Delta t}^m P_h y \right\|_H \leq c \,  \Delta t^{\rho/2} \|y\|_{\rho/2};
  \end{equation*}
  \item for every $y \in D((\lambda-A)^{-\rho/2})$ with $\rho \in [0,1]$ and each $m=1,...,M$:
  \begin{equation*}
   \left\| e^{A_h t_m} P_h y - S_{h,\Delta t}^m P_h y \right\|_H \leq c \,  t_m^{-1} \Delta t^{(2-\rho)/2}\|y\|_{-\rho/2};
  \end{equation*}
  \item for every $y \in D((\lambda-A)^{\alpha})$ with $\alpha \in [1/2,1]$ and each $m=1,...,M$:
  \begin{equation*}
   \left\| e^{A_h t_m} (\lambda-A_h) R_h y - S_{h,\Delta t}^m (\lambda-A_h) R_h y \right\|_H \leq c \,  t_m^{-1} \Delta t^\alpha \|y\|_\alpha.
  \end{equation*}
 \end{enumerate}
\end{lemma}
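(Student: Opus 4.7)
My plan is to mirror the proof of Lemma \ref{spatialprop}, using the backward-Euler rational approximation $r(z) = (1-z)^{-1}$ in place of the exact exponential. All three assertions can be reduced to standard facts about backward Euler applied to the abstract parabolic problem $y'(t) = A_h y(t)$ in $Y_h$, combined with smoothing estimates and interpolation; parts (ii) and (iii) will then be built on top of (i).

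\textbf{Part (i).} I handle the two endpoints $\rho = 0$ and $\rho = 2$ and interpolate. At $\rho = 0$, Lemma \ref{contraction} together with the contractivity of $(e^{A_h t})_{t \geq 0}$ and $\|P_h\|_{\mathcal L(H)} \leq 1$ immediately yields the uniform bound $2 \|y\|_H$. At $\rho = 2$, the classical backward-Euler error estimate on $Y_h$ gives $\|(e^{A_h t_m} - S_{h,\Delta t}^m) v\|_H \leq c\,\Delta t\,\|(\lambda - A_h) v\|_H$ for $v \in Y_h$; applied to $v = P_h y$ and combined with $\|(\lambda - A_h) P_h y\|_H \leq c\,\|y\|_1$ (which follows from \eqref{projectionequation} applied to $R_h y$ and from an inverse inequality bounding the correction $(\lambda-A_h)(P_h y - R_h y)$) this translates to $c\,\Delta t\,\|y\|_1$. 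A Riesz--Thorin style interpolation between the endpoints extends the estimate to every $\rho \in [0, 2]$, and the $m = 0$ case is trivial.

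\textbf{Part (ii).} The crucial input is a discrete smoothing estimate
\begin{equation*}
 \|(\lambda - A_h)^s S_{h,\Delta t}^m\|_{\mathcal L(Y_h)} \leq c\, t_m^{-s}, \qquad s \in [0,1],\ m \geq 1,
\end{equation*}
uniformly in $h$ and $\Delta t$. This is a classical property of strongly $A$-stable rational approximations and is obtained from a Dunford-calculus argument applied to $r(\Delta t A_h)$; the continuous counterpart is already contained in Lemma \ref{fractionalprop}(vi). Using the fact that $e^{A_h \cdot}$ and $S_{h,\Delta t}^{\cdot}$ commute as functions of $A_h$, I split
\begin{equation*}
 e^{A_h t_m} - S_{h,\Delta t}^m = (e^{A_h s_1} - S_{h,\Delta t}^{m_1})\, e^{A_h s_2} + (e^{A_h s_2} - S_{h,\Delta t}^{m_2})\, S_{h,\Delta t}^{m_1},
\end{equation*}
with $m_1 = \lceil m/2 \rceil$, $m_2 = m - m_1$, $s_i = m_i \Delta t \sim t_m/2$. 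The unperturbed factors $e^{A_h s_2}$ and $S_{h,\Delta t}^{m_1}$ absorb a full factor $t_m^{-1}$ via the smoothing estimates, while the difference factors are controlled by (i) on half of the interval. At $\rho = 0$ this reproduces the classical $c\, t_m^{-1} \Delta t$ bound; at $\rho = 1$, writing $\|y\|_{-1/2} = \|(\lambda-A)^{-1/2} y\|_H$ and shifting one power of $(\lambda - A_h)^{1/2}$ from the data onto the smoothing factor reduces matters to (i). Interpolation between $\rho = 0$ and $\rho = 1$ then yields (ii).

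\textbf{Part (iii) and main obstacle.} Part (iii) reduces cleanly to (ii) via \eqref{projectionequation}: for $y \in D(A)$,
\begin{equation*}
 e^{A_h t_m}(\lambda - A_h) R_h y - S_{h,\Delta t}^m (\lambda - A_h) R_h y = (e^{A_h t_m} - S_{h,\Delta t}^m) P_h (\lambda - A) y,
\end{equation*}
and applying (ii) with $\rho = 2 - 2\alpha \in [0,1]$ to $(\lambda - A) y \in D((\lambda-A)^{-\rho/2})$ yields the bound $c\, t_m^{-1} \Delta t^\alpha \,\|(\lambda - A)^{\alpha - 1}(\lambda - A) y\|_H = c\, t_m^{-1}\Delta t^\alpha \,\|y\|_\alpha$. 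The restriction $y \in D(A)$ is then removed by the density argument used at the end of the proof of Lemma \ref{spatialprop}(iii), relying on Lemma \ref{fractionalprop}(ii)--(iii). The main obstacle is verifying the discrete smoothing estimate uniformly in $h$ and $\Delta t$; this is the workhorse for (ii) and is more delicate than its continuous analogue because one must transfer $A$-stability of $r$ into fractional bounds independent of the spectrum of $A_h$. Once this is in hand, the remainder is a careful repetition of the bookkeeping already developed for the spatial errors in Lemma \ref{spatialprop}.
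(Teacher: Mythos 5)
Your treatment of part (iii) is exactly the paper's: reduce to part (ii) with $\rho = 2-2\alpha$ via the identity \eqref{projectionequation}, observe $\|(\lambda-A)^{\alpha-1}(\lambda-A)y\|_H = \|y\|_\alpha$, and remove the restriction $y \in D(A)$ by the same density argument as in Lemma~\ref{spatialprop}~(iii). Where you differ is in parts (i) and (ii): the paper does not prove these at all but simply cites \cite[Lemma~3.3]{Tambue19}, whereas you reconstruct the standard rational-approximation argument (endpoint estimates at $\rho=0$ and $\rho=2$ plus interpolation for (i); a discrete smoothing estimate for the strongly $A$-stable backward-Euler function combined with a commuting splitting of $e^{A_h t_m}-S_{h,\Delta t}^m$ for (ii)). Your version is self-contained and makes the mechanism visible, at the cost of having to verify two facts the citation hides: the uniform discrete smoothing bound $\|(\lambda-A_h)^s S_{h,\Delta t}^m\|_{\mathcal L(Y_h)} \leq c\,t_m^{-s}$, which is indeed classical, and — the one point where your sketch is genuinely delicate — the bound $\|(\lambda-A_h)P_h y\|_H \leq c\,\|y\|_1$ at the $\rho=2$ endpoint of (i). Since $P_h \neq R_h$, this does not follow from \eqref{projectionequation} alone; your inverse-inequality correction for $(\lambda-A_h)(P_h y - R_h y)$ works but implicitly requires quasi-uniformity of the mesh (to get $\|(\lambda-A_h)v_h\|_H \leq c h^{-2}\|v_h\|_H$ on $Y_h$), an assumption the paper never states but which is also tacitly needed by the cited reference. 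For $s \leq 1/2$ the analogous transfer is free via the $H^1$-stability of $P_h$, which is what rescues your $\rho=1$ step in part (ii). None of this is a gap in substance — it is the standard proof of the result the paper imports — but if you present it you should flag the mesh-regularity hypothesis explicitly.
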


\begin{proof}
 The claims (i) and (ii) are proven in \cite[Lemma 3.3]{Tambue19}.
 It remains to show (iii).
 Let us first assume that $y \in D(A)$.
 Using equation \eqref{projectionequation} and (ii) with $\rho = 2-2\alpha$, we get
 \begin{align*}
  \left\| e^{A_h t_m} (\lambda-A_h) R_h y - S_{h,\Delta t}^m (\lambda-A_h) R_h y \right\|_H &= \left\| e^{A_h t_m} P_h (\lambda-A) y - S_{h,\Delta t}^m P_h (\lambda-A) y \right\|_H \\
  &\leq c \,  t_m^{-1} \Delta t^\alpha \left\|(\lambda-A)^{\alpha-1} (\lambda-A) y \right\|_H \\
  &= c \,  t_m^{-1} \Delta t^\alpha \left\| y \right\|_\alpha.
 \end{align*}
 The above inequality holds also for every $y \in D((\lambda-A)^\alpha)$ with $\alpha \in [1/2,1]$ by standard density arguments as demonstrated in Lemma \ref{spatialprop} (iii).
\end{proof}

\begin{lemma}\label{intspatialprop}
 There exists a constant $c>0$ such that
 \begin{itemize}
  \item[(i)] for every $y \in D((\lambda-A)^{-\rho/2})$ with $\rho \in [0,1]$ and all $t>0$:
   \begin{equation*}
    \left\| \int\limits_0^t e^{A s} y- e^{A_h s} P_h y \, \ds \right\|_H \leq c \,  h^{2-\rho} \|y\|_{-\rho/2};
   \end{equation*}
   \item[(ii)] for every $y \in D((\lambda-A)^{(\mu-1)/2})$ with $\mu \in [0,2]$ and all $t>0$:
   \begin{equation*}
    \left( \int\limits_0^t \left\| e^{A s} y- e^{A_h s} P_h y \right\|_H^2 \ds \right)^{1/2} \leq c \,  h^\mu \|y\|_{(\mu-1)/2};
   \end{equation*}
   \item[(iii)] for every $y \in D((\lambda-A)^\alpha)$ with $\alpha \in [1/2,1]$ and all $t>0$:
   \begin{equation*}
    \left\| \int\limits_0^t (\lambda-A) e^{A s} y- e^{A_h s} (\lambda-A_h) R_h y \, \ds \right\|_H \leq c \,  h^{2\alpha} \|y\|_\alpha;
   \end{equation*}
   \item[(iv)] for every $y \in D((\lambda-A)^\alpha)$ with $\alpha \in [1/2,3/2]$ and all $t>0$:
   \begin{equation*}
    \left( \int\limits_0^t \left\| (\lambda-A) e^{A s} y- e^{A_h s} (\lambda-A_h) R_h y \right\|_H^2 \ds \right)^{1/2} \leq c \,  h^{2\alpha-1} \|y\|_\alpha.
   \end{equation*}
 \end{itemize}
\end{lemma}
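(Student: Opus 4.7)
The plan is to prove claims (iii) and (iv) by reducing them to (i) and (ii) respectively via the identity \eqref{projectionequation}, to prove (i) via a resolvent-based integration-by-parts, and to prove (ii) by splitting the error into a projection part and a discrete-semigroup defect part.

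\textbf{Reductions of (iii) and (iv).} For $y \in D(A)$, identity \eqref{projectionequation} gives $e^{A_h s}(\lambda-A_h)R_h y = e^{A_h s}P_h(\lambda-A)y$, while on the continuous side $(\lambda-A)e^{As}y = e^{As}(\lambda-A)y$ by Lemma \ref{fractionalprop}~(v). Setting $z = (\lambda-A)y$ gives $\|z\|_{\alpha-1} = \|y\|_\alpha$, so claim (iii) for $\alpha \in [1/2, 1]$ becomes claim (i) applied to $z$ with $\rho = 2(1-\alpha) \in [0,1]$, and claim (iv) for $\alpha \in [1/2, 3/2]$ becomes claim (ii) applied to $z$ with $\mu = 2\alpha - 1 \in [0,2]$. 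The extension from $y \in D(A)$ to general $y \in D((\lambda-A)^\alpha)$ follows from the same density argument used in the proof of Lemma \ref{spatialprop}~(iii).

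\textbf{Proof of (i).} The pointwise estimate from Lemma \ref{spatialprop}~(ii) delivers the non-integrable rate $c \, h^{2-\rho} s^{-1}$ at $s = 0$, so I resort to a resolvent identity. Set $w = (\lambda-A)^{-1}y \in D(A)$, for which $\|w\|_{(2-\rho)/2} = \|y\|_{-\rho/2}$ by Lemma \ref{fractionalprop}~(iv). Writing $y = \lambda w - A w$ and using $A e^{As}w = \frac{d}{ds}e^{As}w$, one obtains
\[
  \int_0^t e^{As}y\,\ds = \lambda \int_0^t e^{As}w\,\ds - (e^{At}w - w),
\]
and the analogous identity for the discrete operator, exploiting $P_h y = (\lambda-A_h)R_h w$ from \eqref{projectionequation}, yields the corresponding expression with $R_h w$ in place of $w$. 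Subtracting produces three terms: the integrated difference $\int_0^t (e^{As}w - e^{A_h s}R_h w)\,\ds$, the pointwise difference $e^{At}w - e^{A_h t}R_h w$, and the Ritz projection error $w - R_h w$. Each is bounded by $c h^{2-\rho}\|y\|_{-\rho/2}$ via Lemma \ref{spatialprop}~(i) with $r = 2-\rho$, the projection estimate \eqref{projectionrate}, and the standard Ritz bound $\|w - R_h w\|_H \le c h^{2-\rho}\|w\|_{(2-\rho)/2}$, where the triangle inequality $\|e^{A_h s}(R_h w - P_h w)\|_H \le \|R_h w - w\|_H + \|w - P_h w\|_H$ is used to convert $R_h w$ into $P_h w$ before invoking Lemma \ref{spatialprop}~(i).

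\textbf{Proof of (ii) and main obstacle.} I split the error as
\[
  e^{As}y - e^{A_h s}P_h y = (I-P_h)e^{As}y + \bigl(P_h e^{As}y - e^{A_h s}P_h y\bigr).
\]
For the first piece, \eqref{projectionrate} with $r = \mu$ together with the analytic-semigroup smoothing $\int_0^t\|(\lambda-A)^{1/2}e^{As}v\|_H^2\,\ds \le c\|v\|_H^2$ applied to $v = (\lambda-A)^{(\mu-1)/2}y$ yields the required $c h^\mu \|y\|_{(\mu-1)/2}$ bound. For the second piece $\phi(s) := P_h e^{As}y - e^{A_h s}P_h y$, we have $\phi(0) = 0$ and $\dot\phi = A_h \phi + (P_h A - A_h P_h)e^{As}y$, where \eqref{projectionequation} gives the consistency defect $P_h A - A_h P_h = (A_h - \lambda I)(R_h - P_h)$ on $D(A)$. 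The main obstacle is to close this second piece with the sharp rate $h^\mu$ without a logarithmic loss; the strategy is a Duhamel argument combined with the analogous $L^2$-in-time smoothing for the discrete semigroup $(e^{A_h s})_{s \ge 0}$ and the Ritz/$L^2$-projection error bounds, in the spirit of the classical Aubin--Nitsche duality arguments for parabolic finite element problems.
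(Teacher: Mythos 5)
Your reductions of (iii) and (iv) are exactly the paper's argument: for $y\in D(A)$ use \eqref{projectionequation} and Lemma~\ref{fractionalprop}~(iv)--(v) to rewrite both terms in terms of $z=(\lambda-A)y$, invoke (i) with $\rho=2-2\alpha$ and (ii) with $\mu=2\alpha-1$, note $\|z\|_{\alpha-1}=\|y\|_\alpha$, and finish by the density argument of Lemma~\ref{spatialprop}~(iii). Where you diverge is that the paper does not prove (i) and (ii) at all -- it simply cites \cite[Lemma~3.2]{Tambue19} -- whereas you attempt direct proofs. Your resolvent/integration-by-parts argument for (i) is a reasonable self-contained route: writing $y=(\lambda-A)w$ with $w=(\lambda-A)^{-1}y$, integrating $Ae^{As}w$ exactly, and using \eqref{projectionequation} to get the matching discrete identity reduces everything to non-singular applications of Lemma~\ref{spatialprop}~(i) with $r=\rho=2-\rho_{\mathrm{orig}}$ plus the Ritz and $L^2$ projection errors, all of which close at rate $h^{2-\rho}$ (with the constant depending on $T$, as throughout the paper). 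Your treatment of (ii), however, is only a sketch: the splitting into $(I-P_h)e^{As}y$ and the discrete defect $\phi(s)$ is standard, but you explicitly leave open how to bound the Duhamel term for $\phi$ at the sharp rate $h^\mu$ without a logarithmic loss, which is precisely the technical content of the cited result. So relative to the paper your proof is complete (the paper's own contribution here is only (iii) and (iv), which you reproduce); relative to a fully self-contained proof, part (ii) still has an unresolved step that you would either need to close with the duality argument you allude to or replace with the citation the paper uses.
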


\begin{proof}
 The claims (i) and (ii) are shown in \cite[Lemma 3.2]{Tambue19}.
 It remains to show (iii) and (iv).
 First, we assume that $y \in D(A)$.
 Using Lemma \ref{fractionalprop} (iv) and (v), equation \eqref{projectionequation}, and (i) with $\rho = 2 - 2\alpha$, we get
 \begin{align}\label{Ineq1}
  \left\| \int\limits_0^t (\lambda-A) e^{A s} y- e^{A_h s} (\lambda-A_h) R_h y \, \ds \right\|_H &= \left\| \int\limits_0^t e^{A s} (\lambda-A) y- e^{A_h s} P_h (\lambda-A) y \, \ds \right\|_H \nonumber \\
  &\leq c \,  h^{2\alpha} \|(\lambda-A)^{\alpha-1}(\lambda-A) y\|_H \nonumber \\
  &= c \,  h^{2\alpha} \|y\|_\alpha.
 \end{align}
 Using Lemma \ref{fractionalprop} (iv) and (v), equation \eqref{projectionequation}, and (ii) with $\mu = 2\alpha-1$, we have
 \begin{align}\label{Ineq2}
  \left( \int\limits_0^t \left\| (\lambda-A) e^{A s} y- e^{A_h s} (\lambda-A_h) R_h y \right\|_H^2 \ds \right)^{1/2} &= \left( \int\limits_0^t \left\| e^{A s} (\lambda-A) y - e^{A_h s} P_h (\lambda-A) y \right\|_H^2 \ds \right)^{1/2} \nonumber \\
  &\leq c \,  h^{2\alpha-1} \|(\lambda-A)^{\alpha-1}(\lambda-A) y\|_H \nonumber\\
  &= c \,  h^{2\alpha-1} \|y\|_\alpha.
 \end{align}
 Inequality \eqref{Ineq1} holds for every $y \in D((\lambda-A)^\alpha)$ with $\alpha \in [1/2,1]$ and inequality \eqref{Ineq2} holds for every $y \in D((\lambda-A)^\alpha$ with $\alpha \in [1/2,3/2]$ by standard density arguments as shown in Lemma \ref{spatialprop} (iii).
\end{proof}

\begin{lemma}\label{inttimeprop}
 There exists a constant $c>0$ such that
 \begin{itemize}
  \item[(i)] for arbitrary small $\varepsilon>0$, every $y \in D((\lambda-A)^{-\rho/2})$ with $\rho \in [0,1]$, and each $m=1,...,M$:
   \begin{equation*}
    \left\| \sum_{k=0}^{m-1} \, \int\limits_{t_k}^{t_{k+1}} e^{A_h s} P_h y - S_{h,\Delta t}^{k+1} P_h y \, \ds \right\|_H \leq c \,  \Delta t^{(2-\rho)/2 -\varepsilon} \|y\|_{-\rho/2};
   \end{equation*}
  \item[(ii)] for arbitrary small $\varepsilon>0$, every $y \in D((\lambda-A)^{(\mu-1)/2})$ with $\mu \in [0,2]$, and each $m=1,...,M$:
   \begin{equation*}
    \left( \sum_{k=0}^{m-1} \, \int\limits_{t_k}^{t_{k+1}} \left\| e^{A_h s} P_h y - S_{h,\Delta t}^{k+1} P_h y \right\|_H^2 \ds \right)^{1/2} \leq c \,  \Delta t^{\mu/2 - \varepsilon} \|y\|_{(\mu-1)/2};
   \end{equation*}
  \item[(iii)] for arbitrary small $\varepsilon>0$, every $y \in D((\lambda-A)^\alpha)$ with $\alpha \in [1/2,1]$, and each $m=1,...,M$:
   \begin{equation*}
    \left\| \sum_{k=0}^{m-1} \, \int\limits_{t_k}^{t_{k+1}} e^{A_h s} (\lambda-A_h) R_h y - S_{h,\Delta t}^{k+1} (\lambda-A_h) R_h y \, \ds \right\|_H \leq c \,  \Delta t^{\alpha -\varepsilon} \|y\|_\alpha;
   \end{equation*}
 \end{itemize}
 \item[(iv)] for arbitrary small $\varepsilon>0$, every $y \in D((\lambda-A)^\alpha)$ with $\alpha \in [1/2,3/2]$, and each $m=1,...,M$:
   \begin{equation*}
    \left( \sum_{k=0}^{m-1} \, \int\limits_{t_k}^{t_{k+1}} \left\| e^{A_h s} (\lambda-A_h) R_h y - S_{h,\Delta t}^{k+1} (\lambda-A_h) R_h y \right\|_H^2 \ds \right)^{1/2} \leq c \,  \Delta t^{(2\alpha-1)/2 - \varepsilon} \|y\|_\alpha.
   \end{equation*}
\end{lemma}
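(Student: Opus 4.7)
My plan is to mirror Lemma \ref{intspatialprop}: treat parts (i) and (ii) as integrated versions of the pointwise estimates of Lemma \ref{timeprop}(i)--(ii), which are essentially contained in \cite{Tambue19}, and then derive parts (iii) and (iv) from (i) and (ii) via the commutation identity \eqref{projectionequation} together with a standard density argument.

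For (i) and (ii), I would bound the integrand for $s \in [t_k, t_{k+1}]$ by splitting
\begin{equation*}
e^{A_h s} P_h y - S_{h,\Delta t}^{k+1} P_h y = \bigl(e^{A_h s} - e^{A_h t_{k+1}}\bigr) P_h y + \bigl(e^{A_h t_{k+1}} - S_{h,\Delta t}^{k+1}\bigr) P_h y
\end{equation*}
and estimating the two pieces separately. The first is handled by the analyticity of $(e^{A_h t})_{t \geq 0}$ together with Lemma \ref{fractionalprop}(vi)--(vii), while the second is controlled directly by Lemma \ref{timeprop}(i)--(ii). Integrating in $s$ and summing over $k$ produces a logarithmic factor $\ln(T/\Delta t)$ from the $t_m^{-1}$ singularity in Lemma \ref{timeprop}(ii); this is absorbed into $\Delta t^{-\varepsilon}$ for arbitrarily small $\varepsilon>0$, which explains the $\varepsilon$-loss in the exponent. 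The starting interval $[0,t_1]$ must be treated separately using Lemma \ref{timeprop}(i) since (ii) gives a non-integrable singularity at $s=0$. For (ii), one squares before integrating and the same logic applies, with $\int s^{-2}\,ds$ in place of $\int s^{-1}\,ds$ and the resulting logarithm absorbed identically.

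Once (i) and (ii) are in place, parts (iii) and (iv) follow exactly as in Lemma \ref{intspatialprop}(iii)--(iv). For $y \in D(A)$, the identity \eqref{projectionequation} rewrites
\begin{equation*}
e^{A_h s}(\lambda-A_h) R_h y - S_{h,\Delta t}^{k+1}(\lambda-A_h) R_h y = e^{A_h s} P_h (\lambda-A) y - S_{h,\Delta t}^{k+1} P_h (\lambda-A) y.
\end{equation*}
For (iii), apply (i) with $\rho = 2-2\alpha \in [0,1]$ to the element $(\lambda-A) y \in D((\lambda-A)^{\alpha-1})$; since $\|(\lambda-A) y\|_{-\rho/2} = \|(\lambda-A)^{\alpha-1}(\lambda-A) y\|_H = \|y\|_\alpha$ by Lemma \ref{fractionalprop}(iv), the bound becomes $c\, \Delta t^{\alpha-\varepsilon}\|y\|_\alpha$. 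For (iv), the same trick with (ii) and $\mu = 2\alpha-1 \in [0,2]$ produces $c\, \Delta t^{(2\alpha-1)/2-\varepsilon}\|y\|_\alpha$. The extension from $y \in D(A)$ to $y \in D((\lambda-A)^\alpha)$ is then the standard density argument used in Lemma \ref{spatialprop}(iii), controlling the auxiliary cross-terms via Lemma \ref{fractionalprop}(iv)--(vi) together with Lemma \ref{contraction} to keep $\|S_{h,\Delta t}^{k+1}\|_{\mathcal L(H)}\leq 1$.

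The main technical obstacle I anticipate is the careful treatment of the initial subinterval $[0,t_1]$ in (i) and (ii), where the singular factor in Lemma \ref{timeprop}(ii) is not integrable and one must switch to Lemma \ref{timeprop}(i) at the price of the $\varepsilon$-loss that appears in the statement. The summation of the remaining pieces, the absorption of the logarithm into $\Delta t^{-\varepsilon}$, and the density step in (iii)--(iv) are then routine.
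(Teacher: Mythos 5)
Your proposal matches the paper's proof: parts (i) and (ii) are simply cited from Tambue et al.\ (the integrated analogue of Lemma~\ref{timeprop}), and parts (iii) and (iv) are reduced to (i) and (ii) via the identity~\eqref{projectionequation} with $\rho=2-2\alpha$ and $\mu=2\alpha-1$, followed by the same density argument as in Lemma~\ref{spatialprop}(iii). Your additional sketch of how (i)--(ii) would be proved from scratch is sound (the paper does not spell this out), so the proposal is correct and takes essentially the same route.
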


\begin{proof}
 Assertions (i) and (ii) are proven in \cite[Lemma 3.5]{Tambue19}.
 Claims (iii) and (iv) can be obtained similarly to Lemma \ref{intspatialprop} (iii) and (iv).
\end{proof}

\section{Proof of Theorem \ref{mainresult}}\label{sec:proof}

After all the preparation in the previous section, we can now prove the main result.

\begin{proof}[Proof of Theorem \ref{mainresult}]
 The mild solution of system \eqref{stochheatcontrolled} can be rewritten $\mathbb P$-a.s.
 \begin{align*}
  y(t_m) &= e^{A t_m} \xi - \sum_{k=0}^{m-1} \int\limits_{t_k}^{t_{k+1}} e^{A (t_m-s)} B R^{-1} B^* \mathcal P(s) y(s) \, \ds - \sum_{k=0}^{m-1} \int\limits_{t_k}^{t_{k+1}} (\lambda-A) e^{A (t_m-s)} N R_b^{-1} \mathcal G^* \mathcal H(s) y(s) \, \ds \\
  &\quad + \sum_{k=0}^{m-1} \int\limits_{t_k}^{t_{k+1}} e^{A (t_m-s)} G \, \mathrm{d}W(s) + \sum_{k=0}^{m-1} \int\limits_{t_k}^{t_{k+1}} (\lambda-A) e^{A (t_m-s)} N \, \mathrm{d}W_b(s).
 \end{align*}
 Similarly, the fully discrete scheme \eqref{stochheatfullydiscr} can be rewritten $\mathbb P$-a.s
 \begin{align*}
  y_h^m &= S_{h,\Delta t}^m P_h \xi - \sum_{k=0}^{m-1} \int\limits_{t_k}^{t_{k+1}} S_{h,\Delta t}^{m-k} B_h R^{-1} B_h^* \mathcal P_h^k y_h^k \, \ds - \sum_{k=0}^{m-1} \int\limits_{t_k}^{t_{k+1}} S_{h,\Delta t}^{m-k} B_h^b R_b^{-1} \left( B_h^b \right)^* \mathcal P_h^k y_h^k \, \ds \\
  &\quad + \sum_{k=0}^{m-1} \int\limits_{t_k}^{t_{k+1}} S_{h,\Delta t}^{m-k} P_h G(t_{m-1}) \, \mathrm{d}W(s) + \sum_{k=0}^{m-1} \int\limits_{t_k}^{t_{k+1}} S_{h,\Delta t}^{m-k} B_h^b \, \mathrm{d}W_b(s).
 \end{align*}
 Therefore, we obtain
 \begin{equation}\label{ineq1}
  \|y(t_m)-y_h^m\|_{L^2(\Omega;\,H)} \leq \I_1 + \I_2 + \I_3 + \I_4 + \I_5, 
 \end{equation}
 where
 \begin{align*}
  \I_1 &= \left\| e^{A t_m} \xi - S_{h,\Delta t}^m P_h \xi \right\|_{L^2(\Omega;\,H)}, \\
  \I_2 &= \left\| \sum_{k=0}^{m-1} \int\limits_{t_k}^{t_{k+1}} e^{A (t_m-s)} B R^{-1} B^* \mathcal P(s) y(s) \, \ds - \int\limits_{t_k}^{t_{k+1}} S_{h,\Delta t}^{m-k} B_h R^{-1} B_h^* \mathcal P_h^k y_h^k \, \ds \right\|_{L^2(\Omega;\,H)}, \\
  \I_3 &= \left\| \sum_{k=0}^{m-1} \int\limits_{t_k}^{t_{k+1}} (\lambda-A) e^{A (t_m-s)} N R_b^{-1} \mathcal G^* \mathcal H(s) y(s) \, \ds - \int\limits_{t_k}^{t_{k+1}} S_{h,\Delta t}^{m-k} B_h^b R_b^{-1} \left( B_h^b \right)^* \mathcal P_h^k y_h^k \, \ds \right\|_{L^2(\Omega;\,H)}, \\
  \I_4 &= \left\| \sum_{k=0}^{m-1} \int\limits_{t_k}^{t_{k+1}} e^{A (t_m-s)} G \, \mathrm{d}W(s) - \int\limits_{t_k}^{t_{k+1}} S_{h,\Delta t}^{m-k} P_h G \, \mathrm{d}W(s) \right\|_{L^2(\Omega;\,H)}\quad \text{and} \\
  \I_5 &= \left\| \sum_{k=0}^{m-1} \int\limits_{t_k}^{t_{k+1}} (\lambda-A) e^{A (t_m-s)} N \, \mathrm{d}W_b(s) - \int\limits_{t_k}^{t_{k+1}} S_{h,\Delta t}^{m-k} B_h^b \, \mathrm{d}W_b(s) \right\|_{L^2(\Omega;\,H)}.
 \end{align*}
 Lemma \ref{spatialprop} (i) with $r=\rho=\beta$ and Lemma \ref{timeprop} (i) with $\rho=\beta$ gives us
 \begin{align}\label{ineq2}
  \I_1 &\leq \left\|e^{A t_m} \xi - e^{A_h t_m} P_h \xi \right\|_{L^2(\Omega;\,H)} + \left\|e^{A_h t_m} P_h \xi - S_{h,\Delta t}^m P_h \xi \right\|_{L^2(\Omega;\,H)} \nonumber \\
  &\leq c \, (h^\beta + \Delta t^{\beta/2}) \|\xi\|_{L^2(\Omega;\,D((\lambda-A)^{\beta/2})}.
 \end{align}
 Recall that $B_h = P_h B$.
 We have
 \begin{equation}\label{ineq3}
  \I_2 \leq \I_{2,1} + \I_{2,2} + \I_{2,3} + \I_{2,4} + \I_{2,5} + \I_{2,6},
 \end{equation}
 where
 \begin{align*}
  \I_{2,1} &= \left\| \sum_{k=0}^{m-1} \int\limits_{t_k}^{t_{k+1}} \left[e^{A (t_m-s)} -e^{A_h (t_m-s)}P_h\right] B R^{-1} B^* \mathcal P(s) y(s) \, \ds \right\|_{L^2(\Omega;\,H)}, \\
  \I_{2,2} &= \left\| \sum_{k=0}^{m-1} \int\limits_{t_k}^{t_{k+1}} \left[e^{A_h (t_m-s)} P_h - S_{h,\Delta t}^{m-k} P_h \right] B R^{-1} B^* \mathcal P(s) y(s) \, \ds \right\|_{L^2(\Omega;\,H)}, \\
  \I_{2,3} &= \left\| \sum_{k=0}^{m-1} \int\limits_{t_k}^{t_{k+1}} S_{h,\Delta t}^{m-k} B_h R^{-1} [B^*-B_h^*] \mathcal P(s) y(s) \, \ds \right\|_{L^2(\Omega;\,H)}, \\
  \I_{2,4} &= \left\| \sum_{k=0}^{m-1} \int\limits_{t_k}^{t_{k+1}} S_{h,\Delta t}^{m-k} B_h R^{-1} B_h^* [\mathcal P(s) y(s)-\mathcal P(t_k)y(t_k)] \, \ds \right\|_{L^2(\Omega;\,H)}, \\
  \I_{2,5} &= \left\| \sum_{k=0}^{m-1} \int\limits_{t_k}^{t_{k+1}} S_{h,\Delta t}^{m-k} B_h R^{-1} B_h^* \left[\mathcal P(t_k)-\mathcal P_h^k P_h\right] y(t_k) \, \ds \right\|_{L^2(\Omega;\,H)} \quad \text{and} \\
  \I_{2,6} &= \left\| \sum_{k=0}^{m-1} \int\limits_{t_k}^{t_{k+1}} S_{h,\Delta t}^{m-k} B_h R^{-1} B_h^* \mathcal P_h^k P_h \left[y(t_k)-y_h^k\right] \ds \right\|_{L^2(\Omega;\,H)}.
 \end{align*}
 Recall that the operator $\mathcal P(t)$ is linear and bounded for all $t \in [0,T]$.
 Using Lemmas \ref{mildsolbound}--\ref{riccaticontinuity1}, there exists a constant $c>0$ such that for all $\tau_1,\tau_2 \in [0,T]$ with $\tau_1 < \tau_2$ and all $\gamma \in (0,1/4)$ with $\gamma \leq \beta/2$
 \begin{align}\label{continuity1}
  \left\| \mathcal P(\tau_2) y(\tau_2)- \mathcal P(\tau_1) y(\tau_1) \right\|_{L^2(\Omega;\,H)} &\leq \left\| \mathcal P(\tau_2) \left[y(\tau_2) - y(\tau_1) \right] \right\|_{L^2(\Omega;\,H)} \nonumber \\
  &\quad + \left\| \left[ \mathcal P(\tau_2) - \mathcal P(\tau_1) \right] y(\tau_1) \right\|_{L^2(\Omega;\,H)} \nonumber \\
  &\leq c \, (\tau_2-\tau_1)^\gamma \left(1 + \|\xi\|_{L^2(\Omega;\,D((\lambda-A)^{\beta/2})}\right).
 \end{align}
 We set for all $t \in [0,T]$ and $\mathbb P$-a.s
 \begin{equation*}
  \tilde y(t) = B R^{-1} B^* \mathcal P(t) y(t).
 \end{equation*}
 By a change of variables, we get
 \begin{align*}
  \I_{2,1} &\leq \left\| \int\limits_0^{t_m} \left[e^{A (t_m-s)} -e^{A_h (t_m-s)}P_h\right] \left( \tilde y(s)-\tilde y(t_m) \right) \ds \right\|_{L^2(\Omega;\,H)} \\
  &\quad + \left\| \int\limits_0^{t_m} \left[e^{A (t_m-s)} -e^{A_h (t_m-s)}P_h\right] \tilde y(t_m) \, \ds \right\|_{L^2(\Omega;\,H)} \\
  &= \left\| \int\limits_0^{t_m} \left[e^{A s} -e^{A_h s}P_h\right] \left(\tilde y(s)-\tilde y(t_m) \right) \ds \right\|_{L^2(\Omega;\,H)} + \left\| \int\limits_0^{t_m} \left[e^{A s} -e^{A_h s}P_h\right] \tilde y(t_m) \, \ds \right\|_{L^2(\Omega;\,H)}.
 \end{align*}
 Recall that the operators $B$ and $R^{-1}$ are linear and bounded.
 Due to Lemma \ref{mildsolbound}, Lemma \ref{spatialprop} (ii) with $\rho=0$, Lemma \ref{intspatialprop} (i) with $\rho=0$, and inequality \eqref{continuity1}, we obtain for all $\gamma \in (0,1/4)$ with $\gamma \leq \beta/2$
 \begin{align}\label{ineq4}
  \I_{2,1} &\leq c \, h^2 \int\limits_0^{t_m} s^{-1} \left\| \tilde y(s)-\tilde y(t_m) \right\|_{L^2(\Omega;\,H)} \ds + c \, h^2 \left\| \tilde y(t_m) \right\|_{L^2(\Omega;\,H)} \nonumber \\
  &\leq c \, h^2 \left[\int\limits_0^{t_m} s^{\gamma-1} \ds + 1 \right] \left(1 + \|\xi\|_{L^2(\Omega;\,D((\lambda-A)^{\beta/2})}\right) \nonumber \\
  &\leq c \, h^2 \left(1 + \|\xi\|_{L^2(\Omega;\,D((\lambda-A)^{\beta/2})}\right).
 \end{align}
 We have
 \begin{align}\label{ineq5}
  \I_{2,2} &\leq \I_{2,2}^{(1)} + \I_{2,2}^{(2)},
 \end{align}
 where
 \begin{align*}
  \I_{2,2}^{(1)} &= \left\| \sum_{k=0}^{m-1} \int\limits_{t_k}^{t_{k+1}} \left[e^{A_h (t_m-s)} P_h - S_{h,\Delta t}^{m-k} P_h \right] \left(\tilde y(s)-\tilde y(t_m) \right) \ds \right\|_{L^2(\Omega;\,H)} \quad \text{and} \\
  \I_{2,2}^{(2)} &= \left\| \sum_{k=0}^{m-1} \int\limits_{t_k}^{t_{k+1}} \left[e^{A_h (t_m-s)} P_h - S_{h,\Delta t}^{m-k} P_h \right] \tilde y(t_m) \, \ds \right\|_{L^2(\Omega;\,H)}.
 \end{align*}
 By a change of variables, we obtain
 \begin{align*}
  \I_{2,2}^{(1)} &\leq \left\| \sum_{k=0}^{m-1} \int\limits_{t_k}^{t_{k+1}} \left[ I - e^{A_h (t_{k+1}-s)} \right] e^{A_h s} P_h \left(\tilde y(t_m-s)-\tilde y(t_m) \right) \ds \right\|_{L^2(\Omega;\,H)} \\
  &\quad + \left\| \sum_{k=0}^{m-1} \int\limits_{t_k}^{t_{k+1}} \left[e^{A_h t_{k+1}} P_h - S_{h,\Delta t}^{k+1} P_h \right] \left(\tilde y(t_m-s)-\tilde y(t_m) \right) \ds \right\|_{L^2(\Omega;\,H)} \quad \text{and} \\
  \I_{2,2}^{(2)} &= \left\| \sum_{k=0}^{m-1} \int\limits_{t_k}^{t_{k+1}} \left[e^{A_h s} P_h - S_{h,\Delta t}^{k+1} P_h \right] \tilde y(t_m) \, \ds \right\|_{L^2(\Omega;\,H)}.
 \end{align*}
 Note that the properties from Lemma \ref{fractionalprop} hold also for the operator $A_h$ and for the corresponding semigroup $(e^{A_h t})_{t \geq 0}$.
 Moreover, the operator $B_h$ is linear and bounded.
 Using Lemma \ref{timeprop} (ii) with $\rho = 0$ and inequality \eqref{continuity1}, we get for all $\gamma \in (0,1/4)$ with $\gamma \leq \beta/2$
 \begin{align}\label{ineq6}
  \I_{2,2}^{(1)} &\leq c \sum_{k=0}^{m-1} \int\limits_{t_k}^{t_{k+1}} (t_{k+1}-s) s^{-1} \left\| P_h \left(\tilde y(t_m-s)-\tilde y(t_m) \right) \right\|_{L^2(\Omega;\,H)} \ds \\
  &\quad + c \, \Delta t \sum_{k=0}^{m-1} \int\limits_{t_k}^{t_{k+1}} t_{k+1}^{-1} \left\| \tilde y(t_m-s)-\tilde y(t_m)  \right\|_{L^2(\Omega;\,H)} \ds \nonumber \\
  & \leq c \, \Delta t.
 \end{align}
 Due to Lemma \ref{mildsolbound} and Lemma \ref{inttimeprop} (i) with $\rho = 0$, we have
 \begin{equation}\label{ineq7}
  \I_{2,2}^{(2)} \leq c \, \Delta t^{1-\varepsilon} \left\| \tilde y(t_m) \right\|_{L^2(\Omega;\,H)} 
  \leq c \, \Delta t^{1-\varepsilon} \left(1 + \|\xi\|_{L^2(\Omega;\,D((\lambda-A)^{\beta/2})}\right).
 \end{equation}
 Substituting the inequalities \eqref{ineq6} and \eqref{ineq7} in \eqref{ineq5} yields
 \begin{equation}\label{ineq8}
  \I_{2,2} \leq c \, \Delta t^{1-\varepsilon} \left(1 + \|\xi\|_{L^2(\Omega;\,D((\lambda-A)^{\beta/2})}\right).
 \end{equation}
 Using Lemma \ref{mildsolbound}, Lemma \ref{contraction}, and inequality \eqref{inputoperatorrate} with $\rho=1$, we obtain
 \begin{equation}\label{ineq9}
  \I_{2,3} \leq c h \int\limits_0^{t_m} \left\| (\lambda-A)^{1/2} \mathcal P(s)y(s) \right\|_{L^2(\Omega;\,H)} \ds \leq c h \left(1 + \|\xi\|_{L^2(\Omega;\,D((\lambda-A)^{\beta/2})}\right).
 \end{equation}
 Lemma \ref{contraction} and inequality \eqref{continuity1} give us for all $\gamma \in (0,1/4)$ with $\gamma \leq \beta/2$
 \begin{equation}\label{ineq10}
  \I_{2,4} \leq c \sum_{k=0}^{m-1} \int\limits_{t_k}^{t_{k+1}} (s-t_k)^\gamma \ds \left(1 + \|\xi\|_{L^2(\Omega;\,D((\lambda-A)^{\beta/2})}\right) \leq c \, \Delta t^\gamma \left(1 + \|\xi\|_{L^2(\Omega;\,D((\lambda-A)^{\beta/2})}\right).
 \end{equation}
 Due to Assumption \ref{riccatiassumption} and Lemma \ref{mildsolbound}, we get
 \begin{equation}\label{ineq11}
  \I_{2,5} \leq c \, (h^2 + \Delta t) \sum_{k=0}^{m-1} \int\limits_{t_k}^{t_{k+1}} \left\| y(t_k) \right\|_{L^2(\Omega;\,H)} \ds \leq c \, (h^2 + \Delta t) \left(1 + \|\xi\|_{L^2(\Omega;\,D((\lambda-A)^{\beta/2})}\right).
 \end{equation}
 As a consequence of Lemma \ref{contraction}, we have
 \begin{equation}\label{ineq12}
  \I_{2,6} \leq c \sum_{k=0}^{m-1} \left\| y(t_k)-y_h^k \right\|_{L^2(\Omega;\,H)}.
 \end{equation}
 Substituting the inequalities \eqref{ineq4} and \eqref{ineq8}--\eqref{ineq12} in \eqref{ineq3} yields for sufficiently small $\varepsilon>0$ that
 \begin{equation}\label{ineq13}
  \I_2 \leq c \, (h + \Delta t^{\min \{1/4-\varepsilon,\beta/2\}}) \left(1 + \|\xi\|_{L^2(\Omega;\,D((\lambda-A)^{\beta/2})}\right) + c \sum_{k=0}^{m-1} \left\| y(t_k)-y_h^k \right\|_{L^2(\Omega;\,H)}.
 \end{equation}
 Recall that $B_h^b = (\lambda-A_h) R_h N$.
 Similarly as above, we get
 \begin{equation}\label{ineq14}
  \I_3 \leq \I_{3,1} + \I_{3,2} + \I_{3,3} + \I_{3,4} + \I_{3,5},
 \end{equation}
 where
 \begin{align*}
  \I_{3,1} &= \left\| \sum_{k=0}^{m-1} \int\limits_{t_k}^{t_{k+1}} \left[(\lambda-A) e^{A (t_m-s)} - e^{A_h (t_m-s)} (\lambda-A_h) R_h \right] N R_b^{-1} \mathcal G^* \mathcal H(s) y(s) \, \ds \right\|_{L^2(\Omega;\,H)}, \\
  \I_{3,2} &= \left\| \sum_{k=0}^{m-1} \int\limits_{t_k}^{t_{k+1}} \left[e^{A_h (t_m-s)} (\lambda-A_h) R_h - S_{h,\Delta t}^{m-k} (\lambda-A_h) R_h \right] N R_b^{-1} \mathcal G^* \mathcal H(s) y(s) \, \ds \right\|_{L^2(\Omega;\,H)}, \\
  \I_{3,3} &= \left\| \sum_{k=0}^{m-1} \int\limits_{t_k}^{t_{k+1}} S_{h,\Delta t}^{m-k} (\lambda-A_h) R_h N R_b^{-1} \mathcal G^* \left[\mathcal H(s) y(s)- \mathcal H(t_k) y(t_k)\right] \ds \right\|_{L^2(\Omega;\,H)},
 \end{align*}
 \begin{align*}
  \I_{3,4} &= \left\| \sum_{k=0}^{m-1} \int\limits_{t_k}^{t_{k+1}} S_{h,\Delta t}^{m-k} (\lambda-A_h) R_h N R_b^{-1} \left[ \mathcal G^* \mathcal H(t_k) - \left( B_h^b \right)^* \mathcal P_h^k P_h\right] y(t_k) \, \ds \right\|_{L^2(\Omega;\,H)}  \text{and} \\
  \I_{3,5} &= \left\| \sum_{k=0}^{m-1} \int\limits_{t_k}^{t_{k+1}} S_{h,\Delta t}^{m-k} (\lambda-A_h) R_h N R_b^{-1} \left( B_h^b \right)^* \mathcal P_h^k P_h \left[ y(t_k) - y_h^k \right] \ds \right\|_{L^2(\Omega;\,H)}.
 \end{align*}
 Recall that the operator $\mathcal H(t)$ is linear and bounded for all $t \in [0,T)$.
 Using Lemma \ref{mildsolbound}, Lemma \ref{mildsolcontinuity}, and Lemma \ref{riccaticontinuity2}, there exists a constant $c>0$ such that for all $\tau_1,\tau_2 \in [0,T)$ with $\tau_1 < \tau_2$ and all $\gamma \in (0,1/4)$ with $\gamma \leq \beta/2$
 \begin{align}\label{continuity2}
  \left\| \mathcal H(\tau_2) y(\tau_2)- \mathcal H(\tau_1) y(\tau_1) \right\|_{L^2(\Omega;\,H)} &\leq \left\| \mathcal H(\tau_2) \left[y(\tau_2) - y(\tau_1) \right] \right\|_{L^2(\Omega;\,H)} \nonumber \\
  &\quad + \left\| \left[ \mathcal H(\tau_2) - \mathcal H(\tau_1) \right] y(\tau_1) \right\|_{L^2(\Omega;\,H)} \nonumber \\
  &\leq c \, (\tau_2-\tau_1)^\gamma \left(1 + \|\xi\|_{L^2(\Omega;\,D((\lambda-A)^{\beta/2})}\right).
 \end{align}
 We set for all $t \in [0,T)$ and $\mathbb P$-a.s
 \begin{equation*}
  \overline y(t) = N R_b^{-1} \mathcal G^* \mathcal H(t) y(t).
 \end{equation*}
 By a change of variables, we obtain
 \begin{align*}
  \I_{3,1} &\leq \left\| \int\limits_0^{t_m} \left[(\lambda-A) e^{A (t_m-s)} - e^{A_h (t_m-s)} (\lambda-A_h) R_h \right] \left( \overline y(s)- \overline y(t_m)\right) \ds \right\|_{L^2(\Omega;\,H)} \\
  &\quad + \left\| \int\limits_0^{t_m} \left[(\lambda-A) e^{A (t_m-s)} - e^{A_h (t_m-s)} (\lambda-A_h) R_h \right] \overline y(t_m) \, \ds \right\|_{L^2(\Omega;\,H)} \\
  &= \left\| \int\limits_0^{t_m} \left[(\lambda-A) e^{A s} - e^{A_h s} (\lambda-A_h) R_h \right] \left( \overline y(t_m-s)- \overline y(t_m)\right) \ds \right\|_{L^2(\Omega;\,H)} \\
  &\quad + \left\| \int\limits_0^{t_m} \left[(\lambda-A) e^{A s} - e^{A_h s} (\lambda-A_h) R_h \right] \overline y(t_m) \, \ds \right\|_{L^2(\Omega;\,H)}.
 \end{align*}
 Recall that the operators $(\lambda-A)^\alpha N, R_b^{-1}, \mathcal G^*$ are linear and bounded for all $\alpha \in (0,3/4)$.
 Lemma \ref{mildsolbound}, Lemma \ref{spatialprop}~(iii) with $\alpha \in [1/2,3/4)$, inequality \eqref{continuity2}, and Lemma \ref{intspatialprop}~(iii) with $\alpha \in [1/2,3/4)$ give us for all $\gamma \in (0,1/4)$ with $\gamma \leq \beta/2$
 \begin{align}\label{ineq15}
  \I_{3,1} &\leq c \, h^{2 \alpha} \int\limits_0^{t_m} s^{-1} \left\| (\lambda-A)^\alpha \left( \overline y(t_m-s)- \overline y(t_m)\right)\right\|_{L^2(\Omega;\,H)} \ds + c \, h^{2 \alpha} \left\| (\lambda-A)^\alpha \overline y(t_m) \right\|_{L^2(\Omega;\,H)} \nonumber \\
  &\leq c \, h^{2 \alpha} \left[ \int\limits_0^{t_m} s^{\gamma-1} \ds + 1 \right] \left(1 + \|\xi\|_{L^2(\Omega;\,D((\lambda-A)^{\beta/2})}\right)\nonumber \\
  &\leq c \, h^{2 \alpha} \left(1 + \|\xi\|_{L^2(\Omega;\,D((\lambda-A)^{\beta/2})}\right).
 \end{align}
 We have
 \begin{align}\label{ineq16}
  \I_{3,2} &\leq \I_{3,2}^{(1)} + \I_{3,2}^{(2)},
 \end{align}
 where
 \begin{align*}
  \I_{3,2}^{(1)} &= \left\| \sum_{k=0}^{m-1} \int\limits_{t_k}^{t_{k+1}} \left[e^{A_h (t_m-s)} (\lambda-A_h) R_h - S_{h,\Delta t}^{m-k} (\lambda-A_h) R_h \right] \left( \overline y(s)- \overline y(t_m)\right) \ds \right\|_{L^2(\Omega;\,H)}, \\
  \I_{3,2}^{(2)} &= \left\| \sum_{k=0}^{m-1} \int\limits_{t_k}^{t_{k+1}} \left[e^{A_h (t_m-s)} (\lambda-A_h) R_h - S_{h,\Delta t}^{m-k} (\lambda-A_h) R_h \right] \overline y(t_m) \, \ds \right\|_{L^2(\Omega;\,H)}.
 \end{align*}
 By a change of variables, we get
 \begin{align*}
  \I_{3,2}^{(1)} &\leq \left\| \sum_{k=0}^{m-1} \int\limits_{t_k}^{t_{k+1}} \left[ I - e^{A_h (t_{k+1}-s)} \right] e^{A_h s} (\lambda-A_h) R_h \left( \overline y(t_m-s)- \overline y(t_m)\right) \ds \right\|_{L^2(\Omega;\,H)} \\
  &\quad + \left\| \sum_{k=0}^{m-1} \int\limits_{t_k}^{t_{k+1}} \left[e^{A_h t_{k+1}} (\lambda-A_h) R_h - S_{h,\Delta t}^{k+1} (\lambda-A_h) R_h \right] \left( \overline y(t_m-s)- \overline y(t_m)\right) \ds \right\|_{L^2(\Omega;\,H)}, \\
  \I_{3,2}^{(2)} &= \left\| \sum_{k=0}^{m-1} \int\limits_{t_k}^{t_{k+1}} \left[e^{A_h s} (\lambda-A_h) R_h - S_{h,\Delta t}^{k+1} (\lambda-A_h) R_h \right] \overline y(t_m) \, \ds \right\|_{L^2(\Omega;\,H)}.
 \end{align*}
 Recall that the operators $(\lambda-A_h), R_h$ are linear and bounded.
 Lemma \ref{timeprop} (iii) with $\alpha \in [1/2,3/4)$ and inequality \eqref{continuity2} yield for all $\gamma \in (0,1/4)$ with $\gamma \leq \beta/2$
 \begin{align}\label{ineq17}
  \I_{3,2}^{(1)} &\leq c \, \sum_{k=0}^{m-1} \int\limits_{t_k}^{t_{k+1}} (t_{k+1}-s) s^{-1} \left\| (\lambda-A_h) R_h \left( \overline y(t_m-s)- \overline y(t_m)\right) \right\|_{L^2(\Omega;\,H)} \ds \nonumber \\
  &\quad + c \, \Delta t^\alpha \sum_{k=0}^{m-1} \int\limits_{t_k}^{t_{k+1}} t_{k+1}^{-1} \left\| (\lambda-A)^\alpha N R_b^{-1} \mathcal G^* \left( \overline y(t_m-s)- \overline y(t_m)\right) \right\|_{L^2(\Omega;\,H)} \ds \nonumber \\
  &\leq c \left[ \Delta t \int\limits_0^{t_m} s^{\gamma-1} \ds + \Delta t^\alpha \int\limits_0^{t_m} s^{\gamma-1} \ds \right] \left(1 + \|\xi\|_{L^2(\Omega;\,D((\lambda-A)^{\beta/2})}\right) \nonumber \\
  &\leq c \, \Delta t^\alpha \left(1 + \|\xi\|_{L^2(\Omega;\,D((\lambda-A)^{\beta/2})}\right).
 \end{align}
 Due to Lemma \ref{mildsolbound} and Lemma \ref{inttimeprop} (iii) with $\alpha \in [1/2,3,4)$, we have
 \begin{equation}\label{ineq18}
  \I_{3,2}^{(2)} \leq c \, \Delta t^{\alpha-\varepsilon} \left\| (\lambda-A)^\alpha N R_b^{-1} \mathcal G^* \mathcal H(t_m) y(t_m) \right\|_{L^2(\Omega;\,H)} \leq c \, \Delta t^{\alpha-\varepsilon} \left(1 + \|\xi\|_{L^2(\Omega;\,D((\lambda-A)^{\beta/2})}\right).
 \end{equation}
 Substituting the inequalities \eqref{ineq17} and \eqref{ineq18} in \eqref{ineq16} yields
 \begin{equation}\label{ineq19}
  \I_{3,2} \leq c \, \Delta t^\mu \left(1 + \|\xi\|_{L^2(\Omega;\,D((\lambda-A)^{\beta/2})}\right)
 \end{equation}
 with $\mu \in (0,3/4)$.
 By Lemma \ref{contraction} and inequality \eqref{continuity2}, we get for all $\gamma \in (0,1/4)$ with $\gamma \leq \beta/2$
 \begin{equation}\label{ineq20}
  \I_{3,3} \leq c \sum_{k=0}^{m-1} \int\limits_{t_k}^{t_{k+1}} (s-t_k)^\gamma \ds \left(1 + \|\xi\|_{L^2(\Omega;\,D((\lambda-A)^{\beta/2})}\right) \leq c \, \Delta t^\gamma \left(1 + \|\xi\|_{L^2(\Omega;\,D((\lambda-A)^{\beta/2})}\right).
 \end{equation}
 Using Lemma \ref{mildsolbound}, Lemma \ref{contraction}, Assumption \ref{riccatiassumption}, we have
 \begin{equation}\label{ineq21}
  \I_{3,4} \leq c \left( h + \Delta t^{1/4} \right) \left(1 + \|\xi\|_{L^2(\Omega;\,D((\lambda-A)^{\beta/2})}\right).
 \end{equation}
 Lemma \ref{contraction} gives us
 \begin{equation}\label{ineq22}
  \I_{3,5} \leq c \sum_{k=0}^{m-1} \left\| y(t_k)-y_h^k \right\|_{L^2(\Omega;\,H)}.
 \end{equation}
 Substituting the inequalities \eqref{ineq15} and \eqref{ineq19}--\eqref{ineq22} in \eqref{ineq14} yields for sufficiently small $\varepsilon>0$ that
 \begin{equation}\label{ineq23}
  \I_3 \leq c \left( h + \Delta t^{\min \{1/4-\varepsilon,\beta/2\}} \right) \left(1 + \|\xi\|_{L^2(\Omega;\,D((\lambda-A)^{\beta/2})}\right) + c \sum_{k=0}^{m-1} \left\| y(t_k)-y_h^k \right\|_{L^2(\Omega;\,H)}.
 \end{equation}
 We set $S(t) = S_{h,\Delta t}^k$ if $t \in [t_{k-1}, t_k)$ for each $k=1,...,M$.
 The It\^o isometry and a change of variables gives us
 \begin{align*}
  \I_4 &\leq \left( \mathbb E \left\| \sum_{k=0}^{m-1} \int\limits_{t_k}^{t_{k+1}} \left[ e^{A (t_m-s)} - e^{A_h (t_m-s)} P_h \right] G \, \mathrm{d}W(s) \right\|_H^2 \right)^{1/2} \\
  &\quad + \left( \mathbb E \left\| \sum_{k=0}^{m-1} \int\limits_{t_k}^{t_{k+1}} \left[ e^{A_h (t_m-s)} P_h - S_{h,\Delta t}^{m-k} P_h \right] G \, \mathrm{d}W(s) \right\|_H^2 \right)^{1/2} \\
  &= \left( \mathbb E \left\| \int\limits_0^{t_m} \left[ e^{A (t_m-s)} - e^{A_h (t_m-s)} P_h \right] G \, \mathrm{d}W(s) \right\|_H^2 \right)^{1/2} \\
  &\quad + \left( \mathbb E \left\| \int\limits_0^{t_m} \left[ e^{A_h (t_m-s)} P_h - S(t_m-s) P_h \right] G \, \mathrm{d}W(s) \right\|_H^2 \right)^{1/2} \\
  &= \left(\mathbb E  \int\limits_0^{t_m} \left\| \left[ e^{A s} - e^{A_h s} P_h \right] G \right\|_{\mathcal L_{HS}(Q^{1/2}(H);\,H)}^2 \ds \right)^{1/2} \\
  &\quad + \left(\mathbb E \sum_{k=0}^{m-1} \int\limits_{t_k}^{t_{k+1}} \left\| \left[ e^{A_h s} P_h - S_{h,\Delta t}^{k+1} P_h \right] G \right\|_{\mathcal L_{HS}(Q^{1/2}(H);\,H)}^2 \ds \right)^{1/2}.
 \end{align*}
 By Lemma \ref{intspatialprop} (ii) with $\mu = \beta$ and Lemma \ref{inttimeprop} (ii) with $\mu = \beta - 2\varepsilon$, we obtain
 \begin{align}\label{ineq24}
  \I_4 &\leq c \left(h^\beta +\Delta t^{\beta/2} \right) \left(\mathbb E \left\| (\lambda-A)^{-\varepsilon}(\lambda-A)^{(\beta-1)/2} G \right\|_{\mathcal L_{HS}(Q^{1/2}(H);\,H)}^2 \right)^{1/2} \nonumber \\
  &\leq c \left(h^\beta +\Delta t^{\beta/2} \right) \left(\mathbb E \left\| G \right\|_{\mathcal L_{HS}(Q^{1/2}(H);\,D((\lambda-A)^{(\beta-1)/2}))}^2 \right)^{1/2}.
 \end{align}
 Similarly, we have
 \begin{align*}
  \I_5 &\leq \left( \int\limits_0^{t_m} \left\| \left[ (\lambda-A) e^{A s} - e^{A_h s} (\lambda-A_h) R_h \right] N \right\|_{\mathcal L_{HS}(Q_b^{1/2}(H_b);\,H)}^2 \ds \right)^{1/2} \\
  &\quad + \left( \sum_{k=0}^{m-1} \int\limits_{t_k}^{t_{k+1}} \left\| \left[ e^{A_h s} (\lambda-A_h) R_h - S_{h,\Delta t}^k (\lambda-A_h) R_h \right] N \right\|_{\mathcal L_{HS}(Q_b^{1/2}(H_b);\,H)}^2 \ds \right)^{1/2},
 \end{align*}
 resulting from the It\^o isometry and a change of variables.
 Lemma \ref{intspatialprop} (iv) and Lemma \ref{inttimeprop} (iv), both with $\alpha \in [1/2,3,4)$, gives us for sufficiently small $\varepsilon>0$ that
 \begin{equation}\label{ineq25}
  \I_5 \leq c \left(h^{1/2-\varepsilon} +\Delta t^{1/4-\varepsilon} \right) \left\| (\lambda-A)^\alpha N \right\|_{\mathcal L_{HS}(Q_b^{1/2}(H_b);\,H)} \leq c \left(h^{1/2-\varepsilon} +\Delta t^{1/4-\varepsilon} \right).
 \end{equation}
 Substituting the inequalities \eqref{ineq2}, \eqref{ineq13}, \eqref{ineq23}, \eqref{ineq24}, and \eqref{ineq25} in \eqref{ineq1} yields for sufficiently small $\varepsilon>0$
 \begin{align*}
  \|y(t_m)-y_h^m\|_{L^2(\Omega;\,H)} &\leq c \left(h^{\min\{1/2-\varepsilon,\beta\}} +\Delta t^{\min\{1/4-\varepsilon,\beta/2\}} \right) \left(1 + \|\xi\|_{L^2(\Omega;\,D((\lambda-A)^{\beta/2})}\right) \\
  &\quad + c \sum_{k=0}^{m-1} \left\| y(t_k)-y_h^k \right\|_{L^2(\Omega;\,H)}.
 \end{align*}
By applying a discrete version of Gr\"{o}nwall's inequality, see \cite{Clark87},  we therefore get
 \begin{equation*}
  \|y(t_m)-y_h^m\|_{L^2(\Omega;\,H)} \leq c \left(h^{\min\{1/2-\varepsilon,\beta\}} +\Delta t^{\min\{1/4-\varepsilon,\beta/2\}} \right) \left(1 + \|\xi\|_{L^2(\Omega;\,D((\lambda-A)^{\beta/2})}\right),
\end{equation*}
for sufficiently small $\varepsilon>0$.
\end{proof}

\section{Numerical experiments}\label{sec:numericalexperiments}

In order to illustrate the proposed method and the bounds given in Theorem~\ref{mainresult} we have implemented the algorithm in MATLAB\footnote{Full code available at \url{www.tonystillfjord.net}.} and performed a number of numerical experiments on a two-dimensional linear quadratic control problem with noise. We ran all the experiments on one node of the Mechthild computing cluster at the Max Planck Institute Magdeburg. Such a node consists of two Intel Xeon Skylake Silver 4110 processors with 8 cores/CPU, a clockrate of 2.1 GHz and 384 GB RAM.

\subsection{Implementation}
Let $\{ \phi_k^h \}_{k=1}^{N_h}$ be the standard finite element basis of $Y_h$, consisting of the piecewise linear so-called hat-functions. These take the value 1 at the $k$-th node of $\mathcal T_h$ and are zero at all other nodes. Then for $y_h \in Y_h$ we have $y_h = \sum_{k=1}^{N_h}{ \yb_k \phi_k^h }$ for some coefficients $\{\yb_k\}_{k=1}^{N_h}$. Similarly, let the distributed noise $P_h G \, \delta W^m$ with $G = I$ and the boundary noise $B_h^b \, \delta W_b^m$ be represented by the coefficient vectors $\delta \Wb^m$ and $\delta \Wb_b^m$, respectively. Using these representations in~\eqref{stochheatfullydiscr} and testing with $\phi^h_j$ shows that~\eqref{stochheatfullydiscr} is equivalent to 
\begin{equation} \label{stochheat_FEMbasis}
  \begin{aligned}
  \sum_{k=1}^{N_h}{\yb_k^{m}  \iprod{( I - \Delta t A_h) \phi^h_k}{\phi^h_j} }
&= \sum_{k=1}^{N_h}{ \yb_k^{m-1} \Big(  \iprod{\phi^h_k}{\phi^h_j}  - \Delta t  \iprod{B_h R^{-1} B_h^* \mathcal P_h^{m-1} \phi^h_k}{\phi^h_j} }\\
  &\quad - \Delta t  \iprod{B_h^b R_b^{-1} (B_h^b)^*\mathcal P_h^{m-1} \phi^h_k}{\phi^h_j} \Big)\\
  &\quad + \sum_{k=1}^{N_h}{ \big((\delta \Wb^m)_k + (\delta \Wb_b^m)_k \big) \iprod{ \phi^h_k}{\phi^h_j}} , 
\end{aligned}
\end{equation}
for $j, k = 1,\ldots, N_h$. To simplify this, we introduce the mass matrix $\Mb$, the stiffness matrix $\Ab$, the distributed and boundary input matrices $\Bb$ and $\Bb^b$, the output matrix $\Cb$ and the weighting matrices $\Rb$, $\Rb^b$ and $\Qb$, satisfying
\begin{align*}
  \Mb_{i,j} &= \iprod{\phi^h_j}{\phi^h_i} & \Ab_{i,j} &= \iprod{ A_h \phi^h_j}{\phi^h_i} \\  
  \Bb_{i,j} &= \iprod{ B_h \phi^h_j}{\phi^U_i} & \Bb^b_{i,j} &= \iprod{ (\lambda-A_h) R_h N \phi^h_j}{\phi^V_i} &  \Cb_{i,j} &= \iprod{ C_h \phi^h_j}{\phi^Z_i} \\
  \Rb_{i,j} &= \iprod{ \phi^U_j}{\phi^U_i} & \Rb^b_{i,j} &= \iprod{ \phi^V_j}{\phi^V_i} &  \Qb_{i,j} &= \iprod{ \phi^Z_j}{\phi^Z_i} .
\end{align*}
Here, $\{\phi^U_i\}$, $\{\phi^V_i\}$ and $\{\phi^Z_i\}$ denote orthonormal bases for the input and output spaces $\bar{U}$, $\bar{V}$ and $Z$, respectively. We omit the dependency on $h$ to reduce notational clutter. 

The matrices given above were all generated using the FreeFEM++ library\footnote{Available at \url{https://freefem.org/}.}, see~\cite{FreeFem}, and then imported to MATLAB. With these at hand, we can first rewrite~\eqref{riccatisemidiscr} as the matrix-valued equation
\begin{equation}\label{riccatisemimatrix}
\begin{aligned}
   \Mb \frac{\mathrm{d}}{\dt} \Pb(t) \Mb &= -\Ab \Pb(t) \Mb - \Mb \Pb(t) \Ab +  \Mb \Pb(t)\Bb \Rb^{-1} \Bb^T \Pb(t) \Mb \\
   &\quad + \Mb \Pb(t) \Bb^b (\Rb^b)^{-1} (\Bb^b)^T \Pb(t) \Mb - \Cb^T \Qb \Cb,\\
   \Pb(T) &= 0,
\end{aligned}
\end{equation}
where $\Pb$ denotes the matrix representation of $\mathcal P_h$ satisfying
\begin{equation*}
  \mathcal P_h z = \sum_{i,j=1}^{N_h}{\Pb_{i,j}\iprod{z}{\phi^h_j} \phi^h_i},
\end{equation*}
see, e.g.,~\cite{MalPS18}.
Further denote the coefficients at time $t_m$ by $\Pb^m$. Then we can rewrite~\eqref{stochheat_FEMbasis} as an equation for the coefficients $\yb_k^m$ as
\begin{align*}
\yb^{m+1} &= \Sb_{h, \Delta t} \Mb \yb^{m} - \Delta t \Sb_{h, \Delta t} \Bb \Rb^{-1} \Bb^T \Pb^m \Mb \yb^{m} \\
    &\quad - \Delta t \Sb_{h, \Delta t} \Bb^b (\Rb^b)^{-1} (\Bb^b)^T \Pb^m \Mb \yb^{m} \\
  &\quad +  \Sb_{h, \Delta t} \Mb \big( \delta \Wb^m + \delta \Wb_b^m\big) , 
\end{align*}
where $\Sb_{h, \Delta t} = (\Mb -\Delta t \Ab)^{-1}$. Note the similarity to~\eqref{stochheatfullydiscr}, with $\Mb$ taking on the role of the identity operator. 

Since~\eqref{riccatisemimatrix} is matrix-valued, numerically approximating its solution for reasonably fine spatial discretizations is unfeasible unless we can utilize features such as low-rank structure. For this reason, we assume that the input operators are of the form $\R^{n_u} \ni u \mapsto u_1 \Psi_1 + \cdots + u_{n_u} \Psi_{n_u}$ with $\Psi_j \in H$ and $n_u \ll N_h$. Similarly, we assume that the output operator $C : H \mapsto \R^{n_z}$ with $n_z \ll N_h$. This means that $\Bb$, $\Bb^b$ and $\Cb$ are rectangular matrices, which typically leads to a solution $\Pb$ of low numerical rank. See e.g.~\cite{Sti18} for supporting theory in the operator-valued setting. In order to approximate the solution, we apply the Strang splitting scheme~\cite{Sti18a} available in the MATLAB package DREsplit\footnote{Available at \url{www.tonystillfjord.net}.}. We note that Strang splitting is a second-order method, which means that we get a more accurate approximation in time than what we need according to Assumption~\ref{riccatiassumption}. It is, however, essentially as cheap to apply as the corresponding first-order scheme, which is why we use it.

Generating the noise can be done in many ways. Since we only consider rectangular domains in our experiments, we compute samples of the distributed noise using FFT techniques as outlined in~\cite[Chapter 10]{LordPowellShardlow}. In particular, we assume that the eigenvalues $\lambda_{j,k}$ and corresponding eigenvectors $\varphi_{j,k}$ of the covariance operator $Q$ are given by 
\begin{equation*}
  \lambda_{j,k} = (j^2 + k^2)^{-\beta - \epsilon} \quad \text{and} \quad \varphi_{j,k}(x_1,x_2) = \cos(j \pi x_1) \cos(k \pi x_2) 
\end{equation*}
with $\beta = 1$ and $\epsilon = 10^{-4}$. Then the increments $\delta W^m = W(t_{m}) - W(t_{m-1})$ are given by
\begin{equation*}
  \delta W^m \approx \sqrt{\Delta t} \sum_{j,k=0}^{N}{\sqrt{\lambda_{j,k}} \varphi_{j,k} \xi^m_j},
\end{equation*}
where $\xi^n_j$ are the i.i.d. increments of N(0,1) Gaussian distribution~\cite{LordPowellShardlow}. This leads to noise satisfying Assumption~\ref{noiseoperator}. We note that the sum should actually go to infinity, and the truncation to $(N+1)^2$ terms represents a discretization. We use $N = N_h$ in our experiments, which means that the truncation does not affect the convergence order~\cite{Yan05}. 

A similar procedure could conceivably be followed for the boundary noise. However, we found it simpler to express the one-dimensional noise $\delta W_{b,m}$ on each of the edges as $\sqrt{\Delta t} \sum_{k=0}^{N}{\lambda_k \cos(k \pi x)}$ with $x \in [0,1]$ and $\lambda_k = k^{-\beta - \epsilon}$. Then the map $N$ can be explicitly constructed by using the observation that the function 
\begin{equation*}
  \rho(x_1,x_2) = -\frac{\cos(k \pi x_1) \cosh(c (1-x_2))}{c \sinh(c)} \quad \text{with} \quad c = \sqrt{\lambda + k^2 \pi^2}
\end{equation*}
satisfies $\frac{\mathrm{d}}{dx_1} \rho = 0$ at $x_1 = 0$ and $x_1 = 1$, $\frac{\mathrm{d}}{dx_2} \rho = 0$ at $x_2 = 1$ and $\frac{\mathrm{d}}{dx_2} \rho = \cos(k \pi x_1)$ at $x_2 = 0$. Further, it satisfies $\lambda \rho = \Delta \rho$ in the interior of the domain.
 The constructions for the other parts of the boundary are similar. Summing up the four parts gives then the solution of \eqref{neumann}. We then computed the Ritz projections of these functions in FreeFem++ by solving $\iprod{R_h \rho}{\phi}_{Y} = \iprod{\rho}{\phi}_{Y}$ for $\phi \in Y_h$. Finally, the resulting coefficient vectors were multiplied with $\lambda \Mb - \Ab$.

The latter construction was also used for the boundary input operator, by computing $N$ applied to the constant function $1$ on the boundary. This requires no further calculations, since it corresponds to the first eigenvector.

\subsection{Test problem}
For simplicity, we consider the problem on the unit square $\mathcal D = [0, 1]^2$. We let the distributed control operator $B: \R^{n_u} \mapsto L^2(\mathcal D)$ be defined by 
\begin{equation*}
  B u = u_1 \Psi_{p^1} + \cdots + u_{n_u} \Psi_{p^{n_u}},
\end{equation*}
where $p^j = (p^j_1, p^j_2)$ are points in the plane and $\Psi_{p^j} (x_1,x_2) = \exp{-200(x_1-p^j_1)^2 - 200(x_2-p^j_2)^2}$. The interpretation of this is that we have heat sources with high intensity at $p^j$ and tapering off exponentially as we move away radially from $p^j$. The locations of these points are illustrated in Figure~\ref{fig:testproblem}. We note that $B \in \mathcal{L}(\R^{n_u}, L^2(\mathcal D))$. For this example, we picked $n_u = 9$. For the boundary control, we consider a single boundary condition $\frac{\partial}{\partial \nu} y(t,x) = v$ with $v \in \R$.

As output we take the operator 
\begin{equation*}
  C y = 10^{2} \int_{\mathcal D}{ y(x) \chi_{T_1}(x) + \cdots + y(x) \chi_{T_{n_z}}(x) \, \mathrm{d}x },
\end{equation*}
where $\chi_S$ denotes the characteristic function of the set $S$ and $T_j$ denote different areas, illustrated in Figure~\ref{fig:testproblem}. Thus we attempt to control the mean value of the solution in these areas. We note that $C \in \mathcal{L}(H, \R^{n_z})$. Here, $n_z = 3$.

Finally, we use a diffusion coefficient of $10^{-2}$, $\lambda = 1$, and the scaling factors $R = 10^{-2}$ and $R_b = 25$. The latter was chosen such that the distributed and boundary controls influence the solution to a similar extent.

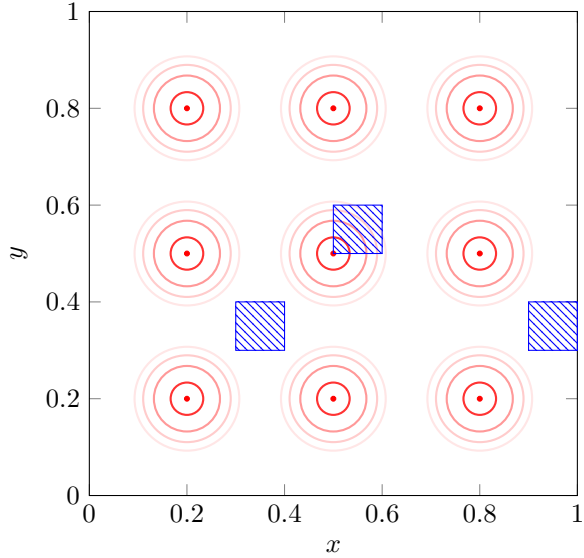
\begin{figure}
  \centering

  \begin{tikzpicture}
    \begin{axis}[xmin = 0, xmax=1, ymin = 0, ymax = 1, xlabel={$x$}, ylabel={$y$}, width = 8cm, height=8cm]
      \foreach \y in {0.2, 0.5, 0.8} {
        \foreach \x in {0.2, 0.5, 0.8} {
          \edef\temp{\noexpand\filldraw [draw=red, fill=red] (\x, \y) circle [radius=0.005];}
          \temp
          \edef\temp{\noexpand\draw [draw=red!80!white, thick] (\x, \y) circle [radius=0.0334];} \temp
          \edef\temp{\noexpand\draw [draw=red!40!white, thick] (\x, \y) circle [radius=0.0677];} \temp
          \edef\temp{\noexpand\draw [draw=red!20!white, thick] (\x, \y) circle [radius=0.0897];} \temp
          \edef\temp{\noexpand\draw [draw=red!10!white, thick] (\x, \y) circle [radius=0.1073];} \temp                   
        }
      }
      \filldraw [draw=blue, pattern color=blue,pattern=north west lines] (0.3, 0.3) rectangle (0.4, 0.4);
      \filldraw [draw=blue, pattern color=blue,pattern=north west lines] (0.5, 0.5) rectangle (0.6, 0.6);
      \filldraw [draw=blue, pattern color=blue,pattern=north west lines] (0.9, 0.3) rectangle (1.0, 0.4);
      
    \end{axis}

  \end{tikzpicture}

  \caption{Locations of distributed inputs (red) and outputs (blue, shaded) in the test problem. The red lines indicate intensities of 0.8, 0.4, 0.2 and 0.1, respectively.}
  \label{fig:testproblem}
\end{figure}

\subsection{Results} \label{sec:results}
We first verify Assumption~\ref{riccatiassumption} by computing the errors 
\begin{equation*}
  \|P^{ref}_h(0) - P_{h}^{0}\|_{L(H)} \quad \text{and} \quad \|(B^b_h)^*P^{ref}_h(0) - (B^b_h)^*P_{h}^{0}\|_{L(H)}
\end{equation*}
for different choices of $h$ and $\Delta t$. We first choose $h = 2^{-6}$ and $\Delta t = 2^{j + 2}$, $j = 1, \ldots, 7$, with the reference solution $P^{ref}_h$ having the same $h$ and $\Delta t = 2^{10}$. The result is shown in Figure~\ref{fig:riccatierror} (left), and shows clear second-order temporal convergence, as expected. We then choose $\Delta t = 2^{9}$ and take $h = 2^{j}$, $j = 1, \ldots, 6$, with the reference solution $P^{ref}_h$ having the same $\Delta t$ and $h = 2^{7}$. The result is shown in Figure~\ref{fig:riccatierror} (right) and also demonstrates second-order spatial convergence except for the first few coarse discretizations.

\begin{figure}
  \centering

\begin{tikzpicture}

  \begin{axis}[%
    name=ax1,
width=4.755cm,
height=5cm,
scale only axis,
xmode=log,
xmin=0.00390625,
xmax=0.25,
xminorticks=true,
xlabel style={font=\color{white!15!black}},
xlabel={$\Delta t$},
ymode=log,
ymin=1e-05,
ymax=10,
yminorticks=true,
ylabel style={font=\color{white!15!black}},
ylabel={Error},
axis background/.style={fill=white},
title style={font=\bfseries},
title={Temporal errors},
]
\addplot [color=black, mark=asterisk, mark options={solid, black, scale=1.5}]
  table[row sep=crcr]{%
0.25	0.285701903979202\\
0.125	0.0974989755596921\\
0.0625	0.0275916478168319\\
0.03125	0.0071895297763269\\
0.015625	0.00180523417388193\\
0.0078125	0.000432463283120534\\
0.00390625	8.66676688814847e-05\\
};

\addplot [color=black, mark=square, mark options={solid, black}]
  table[row sep=crcr]{%
0.25	0.956357970759475\\
0.125	0.269082287771601\\
0.0625	0.0687114122474784\\
0.03125	0.0173075373354389\\
0.015625	0.00428699598748703\\
0.0078125	0.00102162385215429\\
0.00390625	0.00020436473927703\\
};

\addplot [color=black, dashed]
  table[row sep=crcr]{%
0.25	0.167415594415743\\
0.125	0.0418538986039358\\
0.0625	0.0104634746509839\\
0.03125	0.00261586866274598\\
0.015625	0.000653967165686496\\
0.0078125	0.000163491791421624\\
0.00390625	0.000040872947855406\\
};\label{Odt2}

\end{axis}

\begin{axis}[%
  width=4.755cm,
height=5cm,
at={(ax1.south east)},
xshift=1cm,
scale only axis,
xmode=log,
xmin=0.01,
xmax=1,
xminorticks=true,
xlabel style={font=\color{white!15!black}},
xlabel={$h$},
ymode=log,
ymin=0.01,
ymax=16.3039166220533,
yminorticks=true,
axis background/.style={fill=white},
title style={font=\bfseries},
title={Spatial errors},
]
\addplot [color=black, mark=asterisk, mark options={solid, black, scale=1.5}]
  table[row sep=crcr]{%
0.5	0.931539600082039\\
0.25	0.703887550796688\\
0.125	0.498422552853944\\
0.0625	0.225493477453971\\
0.03125	0.066194626821818\\
0.015625	0.0149373778985669\\
}; \label{P}

\addplot [color=black, mark=square, mark options={solid, black}]
  table[row sep=crcr]{%
0.5	16.3039166220533\\
0.25	2.73278048643564\\
0.125	3.53489380905711\\
0.0625	0.430600712668392\\
0.03125	0.0990998191031103\\
0.015625	0.019746147801227\\
}; \label{BP}

\addplot [color=black, dashed]
  table[row sep=crcr]{%
0.5	0.47799609275414\\
0.25	0.23899804637707\\
0.125	0.119499023188535\\
0.0625	0.0597495115942675\\
0.03125	0.0298747557971338\\
0.015625	0.0149373778985669\\
}; \label{Oh}

\addplot [color=black]
  table[row sep=crcr]{%
0.5	15.2958749681325\\
0.25	3.82396874203312\\
0.125	0.955992185508281\\
0.0625	0.23899804637707\\
0.03125	0.0597495115942675\\
0.015625	0.0149373778985669\\
}; \label{Oh2}

\end{axis}
\end{tikzpicture}%
  \caption{The errors $\|P^{ref}_h(0) - P_{h}^{0}\|_{L(H)}$ (\ref{P}) and  $\|(B^b_h)^*P^{ref}_h(0) - (B^b_h)^*P_{h}^{0}\|_{L(H)}$ (\ref{BP}) for the various discretizations outlined in Section~\ref{sec:results}. Reference lines: $\ordo(\Delta t^2)$ (\ref{Odt2}), $\ordo(h)$ (\ref{Oh}), $\ordo(h^2)$ (\ref{Oh2}). }
  \label{fig:riccatierror}
\end{figure}
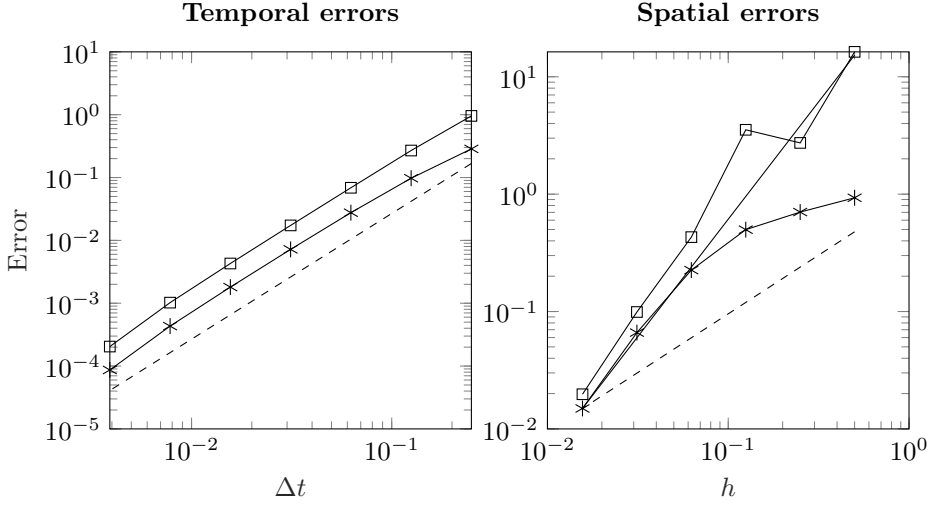

Next, we check Theorem~\ref{mainresult}. By choosing $h = \Delta t^2$, the expected error is
$\ordo\big(\Delta t^{1/4}\big)$ and there is only one parameter to adjust. We therefore choose $h = 2^{-2j}$ and $\Delta t = 2^{-j}$ for $j = 1, \ldots, 6$, and compute a reference solution with $j = 7$. We start by computing the noise for the finest discretization first. Then for each coarser discretization, we add up the temporal increments and compute the $L^2$-projection onto the coarser space. In this way we use the same noise for all the discretizations of each of the $100$ sample paths. The resulting errors measured at $t=T$ are shown in Figure~\ref{fig:main_error}, both for the controlled system and for the corresponding uncontrolled system where $b = v = 0$. We can observe that they decrease with a rate which is decidedly less than $1/2$ and close to $1/4$. Since our theoretical bound is for the worst-case situation, this is fully in line with Theorem~\ref{mainresult}.

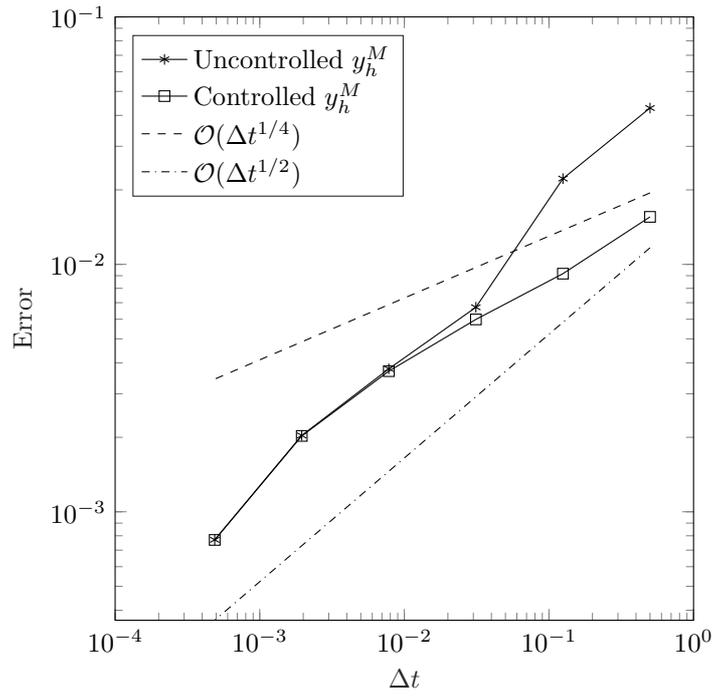
\begin{figure}
  \centering

\begin{tikzpicture}

\begin{axis}[%
  width=7.607cm,
height=8cm,
at={(0cm,0cm)},
scale only axis,
xmode=log,
xmin=0.0001,
xmax=1,
xminorticks=true,
xlabel style={font=\color{white!15!black}},
xlabel={$\Delta t$},
ymode=log,
ymin=0.000364624640554007,
ymax=0.1,
yminorticks=true,
ylabel style={font=\color{white!15!black}},
ylabel={Error},
axis background/.style={fill=white},
legend style={at={(0.03,0.97)}, anchor=north west, legend cell align=left, align=left, draw=white!15!black}
]
\addplot [color=black, mark=asterisk, mark options={solid, black}]
  table[row sep=crcr]{%
0.5	0.0428405551365907\\
0.125	0.0222144504551448\\
0.03125	0.00670691151201929\\
0.0078125	0.0037832787354787\\
0.001953125	0.00203493742969353\\
0.00048828125	0.000770725695976875\\
};
\addlegendentry{Uncontrolled $y_{h}^M$}

\addplot [color=black, mark=square, mark options={solid, black}]
  table[row sep=crcr]{%
0.5	0.015557317996971\\
0.125	0.00917082001599883\\
0.03125	0.00599058928852727\\
0.0078125	0.00370197541646373\\
0.001953125	0.00202560664812812\\
0.00048828125	0.000769757748606118\\
};
\addlegendentry{Controlled $y_{h}^M$}

\addplot [color=black, dashed]
  table[row sep=crcr]{%
0.5	0.0194466474962137\\
0.125	0.0137508563159171\\
0.03125	0.00972332374810686\\
0.0078125	0.00687542815795856\\
0.001953125	0.00486166187405343\\
0.00048828125	0.00343771407897928\\
};
\addlegendentry{$\ordo(\Delta t^{1/4})$}

\addplot [color=black, dashdotted]
  table[row sep=crcr]{%
0.5	0.0116679884977282\\
0.125	0.00583399424886412\\
0.03125	0.00291699712443206\\
0.0078125	0.00145849856221603\\
0.001953125	0.000729249281108014\\
0.00048828125	0.000364624640554007\\
};
\addlegendentry{$\ordo(\Delta t^{1/2})$}

\end{axis}
\end{tikzpicture}%
  
  \caption{The computed errors for the experiment outlined in Section~\ref{sec:results}. They are in line with the $\ordo(\Delta t^{1/4})$-prediction of Theorem~\ref{mainresult}.}
  \label{fig:main_error}
\end{figure}

\section{Conclusions}
We have proved convergence with optimal orders of a numerical scheme for an optimal control problem with both distributed and boundary control, as well as distributed and boundary Q-Wiener noise. Due to the irregularity of the noise, we can expect at most order $1/4$ in time and order $1/2$ in space. A numerical experiment confirms that this bound is optimal.

\section{Acknowledgements}
Parts of this work were completed while the second and third author were with the Max Planck Institute Magdeburg.

\bibliographystyle{plain}
\bibliography{references}

\end{document}